\let\oldtextit\textit 
\renewcommand\emph[1]{\oldtextit{\color{blue}#1}}
\theoremstyle{definition}
\newtheorem{definition}{Definition}[section]
\newcommand{\tim}[1]{\textcolor{blue}{TD: #1}}
\newcommand{\kisun}[1]{\textcolor{red}{KL: #1}}
\newtheorem{corollary}[definition]{Corollary}
\newtheorem{thm}[definition]{Theorem}
\theoremstyle{remark}
\newtheorem{remark}[definition]{Remark}
\newcommand{\CC}{{\mathbb C}}
\newcommand{\RR}{{\mathbb R}}
\newcommand{\mainfilecheck}[1]{1}
\def\@settitle{\begin{center}%
  \baselineskip13\p@\relax
    \Large
\@title
  \end{center}%
}
\title{Certified homotopy tracking using the Krawczyk method}
\author{Timothy Duff}
\address{Department of Mathematics, University of Washington, Box 354350, Seattle, WA 98195}
\email{timduff@uw.edu}
\urladdr{https://timduff35.github.io/timduff35/}
\author{Kisun Lee}
\address{School of Mathematical and Statistical Science, Clemson University, 220 Parkway Drive, Clemson, SC 29634}
\email{kisunl@clemson.edu}
\urladdr{https://klee669.github.io}
\begin{document}

\begin{abstract}
We revisit the problem of certifying the correctness of approximate solution paths computed by numerical homotopy continuation methods.
We propose a conceptually simple approach based on a parametric variant of the Krawczyk method from interval arithmetic.
Unlike most previous methods for certified path-tracking, our approach is applicable in the general setting of parameter homotopies commonly used to solve polynomial systems of equations. 
We also describe a novel preconditioning strategy and give theoretical correctness and termination results. 
Experiments using a preliminary implementation of the method indicate that our approach is competitive with specialized methods appearing previously in the literature, in spite of our more general setting.
\end{abstract}

\maketitle

\section{Introduction}

\emph{Homotopy continuation} is a popular method for finding solutions to a system of nonlinear equations. The main idea involves a system $G(x):\mathbb{C}^n\rightarrow\mathbb{C}^n$ for which we already know the solutions (points $x$ with $G(x)=0$), and tracking these solutions towards the solutions of another system $F(x):\mathbb{C}^n\rightarrow\mathbb{C}^n$ that we wish to solve. This is done by constructing a homotopy $H(x,t):\mathbb{C}^n\times [0,1]\rightarrow\mathbb{C}^n$ such that $H(x,0)=G(x)$ and $H(x,1)=F(x)$. 
In many cases of interest, $F$ and $G$ are both polynomial systems with finitely many nonsingular solutions.
The homotopy is typically constructed such that the solutions to $H(x(t),t)=0$ are implicit functions of $t,$ each represented by a smooth \emph{solution path} $x(t) : [0,1] \to \CC^n$.

To track values of a solution path $x(t)$ numerically from $t=0$ to $t=1$, it is common to use a numerical \emph{predictor-corrector method} \cite[Chapter 2.3]{SommeseWampler:2005}. When an approximation $x_0$ for a solution $x(t_0)$ to $H(x,t_0)$ is available, the path tracking proceeds when we find an approximation $x_1$ for a solution $x(t_1)$ to $H(x,t_1)$ for some $t_1>t_0$. The predictor-corrector method consists of a step constructing a rough approximation for $x(t_1)$ (a predictor step, e.g.~Euler's method) and a step refining this approximation (a corrector step, typically a variant of Newton's method).

For a system $F(x):\mathbb{C}^n\rightarrow\mathbb{C}^n$, we say that $x\in \mathbb{C}^n$ is \emph{certified} if $x$ is contained in a small region which also contains a unique solution $x^*$, and if $x$ can be refined to be arbitrarily close to $x^*$ using a finite procedure (such as Newton's method).

The main problem addressed in this paper is \emph{certified homotopy tracking}: given a homotopy $H(x,t)$ and an initial solution $x(0)$ at time $t^{(0)}=0,$ rigorously certify a sequence of approximations $x_1, \ldots , x_k$ to values of a solution path $x(t^{(1)}), \ldots , x(t^{(k)})$ at discrete time-steps $t^{(1)}< \ldots < t^{(k)}=1$ along with proving the existence and uniqueness of the solution path $x(t)$ along each interval $[t^{(i-1)}, t^{(i)}]$ (that is, constructing an interval box $I^{(i)}$ in $\mathbb{C}^n$ that contains $x(t)$ uniquely for all $t\in [t^{(i-1)},t^{(i)}]$). If this is achieved, we say that $x(t)$ is a \emph{certified solution path}.
In particular, it is not enough to certify that the final approximation $x_k$ is near some solution of $F$ using \textit{a posteriori} methods (e.g.~\cite{breiding2020certifying,burr2019effective,hauenstein2012algorithm,lee2019certifying,krawczyk1969newton}). 
We must show that the true solution approximated by $x_k$ is in fact $x(1).$

The contributions of this paper can be summarized as follows:
\begin{thm} 
\Cref{algo:krawczyk_homotopy_rec,algo:krawczyk_homotopy}, when they terminate, return certified paths for a square linear parameter homotopy.
\end{thm}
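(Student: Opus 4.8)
The plan is to establish correctness and termination separately, with correctness being the conceptually central claim and termination following from continuity/compactness arguments.

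\textbf{Correctness.} The plan is to verify that whenever \Cref{algo:krawczyk_homotopy_rec,algo:krawczyk_homotopy} halt, the objects they output genuinely satisfy the definition of a certified solution path given in the introduction. The key observation is that a parametric Krawczyk operator $K$ attached to the homotopy $H$ and a time-subinterval $[t^{(i-1)}, t^{(i)}]$, when evaluated on an interval box $I^{(i)} \subseteq \CC^n$, has the standard contraction property: if $K(I^{(i)}) \subseteq \mathrm{int}(I^{(i)})$ (the acceptance test the algorithm checks before committing to a step), then for \emph{every} fixed $t \in [t^{(i-1)}, t^{(i)}]$ the system $H(\cdot, t)$ has a unique zero in $I^{(i)}$, and Newton iteration from any point of $I^{(i)}$ converges to it. I would first state this as a lemma (presumably proved earlier in the paper as the parametric analogue of the classical Krawczyk existence-uniqueness theorem) and then argue that the family of these unique zeros, as $t$ ranges over $[0,1]$, patches together across the successive boxes $I^{(1)}, \ldots, I^{(k)}$ into a single continuous — indeed smooth, by the implicit function theorem, since the Krawczyk test forces the Jacobian $\partial H/\partial x$ to be invertible on each box — function $x(t)$ with $x(0)$ the given start point. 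The overlap of consecutive time intervals at the shared endpoint $t^{(i)}$, together with uniqueness in each box, forces the local solutions to agree, so the path is well-defined globally; and each returned approximation $x_i$ lies in $I^{(i)}$, hence is certified in the required sense. The specialization to a \emph{square linear parameter homotopy} is what guarantees that $H(\cdot,t)$ really is a square polynomial system at every $t$ and that the interval-arithmetic enclosures of $H$ and its derivatives used inside $K$ are valid, so that the hypotheses of the parametric Krawczyk lemma are actually met by the quantities the algorithm computes.

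\textbf{Termination.} For \Cref{algo:krawczyk_homotopy}, which drives an adaptive step-size loop, I would invoke the termination result the theorem statement alludes to (``give theoretical correctness and termination results''): along a compact solution path with everywhere-invertible Jacobian, there is a positive lower bound on the admissible step size for which the Krawczyk acceptance test succeeds, so the loop performs finitely many successful steps and reaches $t^{(k)} = 1$. For \Cref{algo:krawczyk_homotopy_rec}, the recursive subdivision variant, termination is the same statement recast as: the bisection recursion bottoms out because, once a time-subinterval is short enough (shorter than that positive bound), the test passes and no further subdivision occurs; since each recursive call halves the interval length, only finitely many levels of recursion are needed. Strictly speaking the theorem is a conditional (``when they terminate''), so this paragraph is really about confirming the output format on the terminating branch rather than proving termination unconditionally — but I would include the quantitative lower-bound lemma for completeness and to connect with the companion termination theorem.

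\textbf{Main obstacle.} The delicate point is the \emph{patching} step: showing that the per-interval unique zeros assemble into one globally defined path with no jumps between boxes. This requires that consecutive boxes share the solution at the common time $t^{(i)}$ — which follows if the algorithm reuses (an enclosure of) the endpoint solution of box $i$ as the start data for box $i+1$, and if uniqueness in box $i+1$ then pins down the continuation — and it requires ruling out the pathology where the path exits a box $I^{(i)}$ through its spatial boundary at some interior time $t \in (t^{(i-1)}, t^{(i)})$. The latter is exactly what the containment-in-interior condition $K(I^{(i)}) \subseteq \mathrm{int}(I^{(i)})$ precludes, via the parametric Krawczyk lemma applied uniformly in $t$; making this uniformity precise (the same box works for the whole time-subinterval, not just its endpoints) is the crux of the argument and the place where interval arithmetic over the product domain $I^{(i)} \times [t^{(i-1)}, t^{(i)}]$ does the real work.
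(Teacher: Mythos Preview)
Your correctness argument is essentially the paper's: invoke the parametric Krawczyk theorem (\Cref{thm:parametric-krawczyk}) on each accepted step to certify existence and uniqueness of the solution in $I^{(i)}$ for every $t\in[t^{(i-1)},t^{(i)}]$, then observe that these local pieces assemble into a single path. The paper's own argument is in fact just the one sentence ``the proof of \Cref{thm:parametric-krawczyk} also proves the correctness of the algorithms''; your patching discussion is a welcome elaboration of what that sentence leaves implicit.

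Two factual corrections, however. First, the acceptance test in the paper is not strict interior containment $K(I)\subset\mathrm{int}(I)$; it is the two-part test $K_{x,Y}(I,T)\subset I$ together with the contraction bound $\sqrt{2}\,\|\boldsymbol{1}_n-Y\cdot\square\partial_xH(I,T)\|<1$ (the latter being what forces uniqueness in the complex setting). Your argument goes through with this substitution, but you should state the test as the algorithm actually runs it.

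Second, and more seriously, you have misread \Cref{algo:krawczyk_homotopy_rec}. Despite the label, it is not a recursive bisection scheme: it is an iterative loop that, on a failed test, shrinks both the step $dt$ and the radius $r$ by the fixed factor $\lambda$, and on a successful test advances $t_0$ and enlarges $dt,r$ by $\lambda$. There is no recursion and no halving of time intervals. The two algorithms differ only in that \Cref{algo:krawczyk_homotopy} inserts the tilting/preconditioning step before each Krawczyk test. Since the theorem is conditional on termination this misreading does not damage the correctness claim, but your termination paragraph as written describes an algorithm that is not in the paper; the paper's actual termination argument (\Cref{cor:termination}) bounds $dt$ from below uniformly via the constants in \Cref{thm:tilted-existence}, and applies the same way to both algorithms.
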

\begin{corollary}
For a given certified homotopy path $x(t)$ for a homotopy $H(x,t):\mathbb{C}^n\times[0,1]\rightarrow\mathbb{C}^n$ with $x_0$ approximating $x(0)$, $x_k$ is a certified solution to $H(x,1)$ that can be refined to $x(1)$.
\end{corollary}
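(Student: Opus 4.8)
The plan is to derive the corollary directly from the preceding theorem together with the definition of a \emph{certified solution path}. By hypothesis, $x(t)$ is a certified homotopy path, so by definition there exist interval boxes $I^{(i)} \subset \CC^n$ such that $x(t) \in I^{(i)}$ uniquely for all $t \in [t^{(i-1)}, t^{(i)}]$, and in particular $x_k \in I^{(k)}$ where $I^{(k)}$ contains $x(t)$ uniquely on $[t^{(k-1)}, 1]$. Specializing to $t = t^{(k)} = 1$, the box $I^{(k)}$ contains the unique solution $x(1)$ of $H(x,1) = 0$ lying in that box. So the first step is simply to restate the definition of a certified path at the right endpoint and observe that it already provides an interval enclosure of $x(1)$ together with a uniqueness certificate.

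Next I would verify the "refinement" clause in the definition of \emph{certified}: that $x_k$ can be refined to be arbitrarily close to $x(1)$ by a finite procedure, namely Newton iteration on $H(\cdot, 1)$. Here I would invoke the standard quadratic-convergence / Kantorovich-type guarantee — or the Krawczyk-operator contraction property established earlier in the paper — applied to the fixed system $F = H(\cdot,1)$ restricted to the box $I^{(k)}$: since $I^{(k)}$ is a Krawczyk-certified box for $F$ (this is what the terminating algorithm outputs at the last step), the Newton map is a contraction there and iterates starting from $x_k \in I^{(k)}$ converge to the unique root $x(1)$. This matches verbatim the paper's own definition of a point being certified for a system $F$.

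Finally I would assemble these two observations: $x_k$ lies in a small region $I^{(k)}$ containing the unique solution $x(1)$ of $H(x,1)$, and $x_k$ refines to $x(1)$ via Newton's method; by the definition given in the introduction this is exactly the assertion that $x_k$ is a certified solution to $H(x,1)$ that can be refined to $x(1)$. I expect the only genuinely substantive point — and hence the "main obstacle" — to be making precise that the box $I^{(k)}$ furnished by the certified-path data is the \emph{same} box on which the endpoint Krawczyk/Newton certificate holds, i.e.\ that uniqueness of $x(t)$ along the whole interval $[t^{(k-1)},1]$ specializes correctly to uniqueness of $x(1)$ within $I^{(k)}$ as a solution of the $0$-dimensional system $H(\cdot,1)$; everything else is a direct unwinding of definitions and a citation of the quadratic convergence of Newton's method on a Krawczyk-certified box.
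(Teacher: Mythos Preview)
Your proposal is correct and takes essentially the same approach as the paper. The paper does not give a standalone proof of this corollary; it simply notes (at the start of Section~4) that correctness follows because each interval box $I^{(i)}$ encloses a unique solution path on $[t^{(i-1)},t^{(i)}]$, which is exactly the content of the parametric Krawczyk theorem (Theorem~\ref{thm:parametric-krawczyk}) specialized at $t=1$---precisely the unwinding you describe, with the refinement clause handled by the finalization step of the algorithm.
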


There have been a number of previous studies in certified homotopy tracking.
For polynomial systems, Beltr\'{a}n and Leykin \cite{beltran2012certified,beltran2013robust} give a certified homotopy tracking algorithm based on Smale's alpha theory \cite[Chapter 8]{blum1998complexity}, mostly tuned to the ``generic'' case of total-degree homotopies involving dense polynomials.
Another noteworthy contribution~\cite{hauenstein2014posteriori} considers the special case of ``Newton homotopies'', where $H(x,t) = F(x) + (1-t)v$ for some fixed $v\in \CC^n.$

Yet another class of certified homotopy methods involves interval arithmetic.
Early work of Kearfott and Xing \cite{doi:10.1137/0731048} proposes a general solution where intervals enclosing the solution path at every time-step are constructed.
More sophisticated variants have since been proposed, e.g.~in~\cite{van2011reliable}, for the case of univariate polynomials in~\cite{xu2018approach}, and most recently in the remarkable preprint~\cite{guillemot2024validated}.
Two appealing aspects of these interval-based methods are that they (1) naturally accommodate systems $F,G$ represented as straight-line programs (also known as algebraic circuits), and (2) generally involve the Krawczyk method~\cite{krawczyk1969newton}, whose \textit{a posteriori} certificates may be easier to verify than those coming from alpha-theory.



In this paper, we propose an interval-based \emph{Krawczyk homotopy} for certified homotopy tracking.
After reviewing preliminaries in~\Cref{sec:prelim}, we consider two Krawczyk homotopy variants in~\Cref{sec:algorithms}: a base-line method (\Cref{algo:krawczyk_homotopy_rec}) illustrating main ideas, and a more effective ``tilted'' variant (\Cref{algo:krawczyk_homotopy}) based on a novel preconditioning step.
\Cref{sec:proofs} addresses correctness and termination for these variants. 
To simplify our analysis in this section, we consider only affine-linear homotopies and assume a model of computation allowing exact computation with real numbers.
In \Cref{sec:experiments}, the experimental results using a proof-of-concept implementation are presented, demonstrating favorable performance for our ``tilted'' variant.




\section{Preliminaries}\label{sec:prelim}

\subsection{Interval arithmetic}
Interval arithmetic performs conservative computation for certified results by arithmetic between intervals. Specifically speaking, for an arithmetic operator $\odot$ and two intervals $I_1$ and $I_2$, we define 
\[I_1\odot I_2:=\{x\odot y\mid x\in I_1, y\in I_2\}.\]
There are formulas for the interval version of standard arithmetic operations. 
Thus, for example
\[
[a,b] + [c,d] = [a+c, b+d].
\]
For more details, see \cite{moore2009introduction}.
These operations cannot be computed exactly when the endpoints $a,\ldots , d$ are represented in floating point, in which case the resulting intervals must be rounded outward.
Since our analysis in~\Cref{sec:proofs} assumes exact real number computation, such concerns do not play a significant role in this paper.

Although it is natural to consider the concepts of intervals with real numbers, interval arithmetic can be extended to complex numbers by introducing intervals for real and imaginary parts. 
In other words, we consider intervals $I_1=\Re(I_1)+i\Im(I_1)$ and $I_2=\Re(I_2)+i\Im(I_2)$. Then, based on interval arithmetic over $\mathbb{R}$, we may define interval arithmetic over $\CC$ as follows:
\begin{align*}
I_1+I_2&=(\Re(I_1)+\Re(I_2))+i(\Im(I_1)+\Im(I_2))\\
I_1-I_2&=(\Re(I_1)-\Re(I_2))+i(\Im(I_1)-\Im(I_2))\\
I_1\cdot I_2&=(\Re(I_1)\cdot\Re(I_2)-\Im(I_1)\cdot \Im(I_2))\\
&\quad\quad\quad\quad\quad+i(\Re(I_1)\cdot\Im(I_2)+\Im(I_1)\cdot\Re(I_2))\\
I_1/I_2 &= \frac{\Re(I_1)\cdot\Re(I_2)+\Im(I_1)\cdot\Im(I_2)}{\Re(I_2)\cdot \Re(I_2)+\Im(I_2)\cdot \Im(I_2)}\\
&\quad\quad\quad\quad\quad+i\frac{\Im(I_1)\cdot\Re(I_2)-\Re(I_1)\cdot\Im(I_2)}{\Re(I_2)\cdot \Re(I_2)+\Im(I_2)\cdot \Im(I_2)}\quad\text{if }0\not\in I_2.
\end{align*}
From now on, we consider the intervals over $\mathbb{C}$ and interval arithmetic over complex numbers unless otherwise mentioned.

Let $I=(I_1,\dots, I_n)$ be an $n$-dimensional interval box in $\mathbb{C}^n$. For a function $f:\mathbb{C}^n\rightarrow \mathbb{C}$ with $n$ variables, we define an \emph{interval extension} $\square f(I)$ of $f$ over $I$ to be an interval in $\mathbb{C}$ satisfying that
\[\square f(I)\supset \{f(x)\mid x\in I\}.\]
In other words, we need $\square f(I)$ to be an interval containing the image of $f$ on $I$. Also, for a point $x=(x_1,\dots, x_n)\in\mathbb{C}^n$, we denote by $x$ not only the point itself but also the interval box $[\Re(x),\Re(x)]+i[\Im(x),\Im(x)]$, so that $\square f(x)$ is well-defined.
For a given function $f$ and an interval box $I$, an interval extension $\square f(I)$ is not unique since interval arithmetic may return different outputs depending on how $f$ is evaluated on $I$. For polynomials, such interval extensions are obtained by interval arithmetic.

For an interval $I=[a,b]$ over $\mathbb{R}$, the \emph{width} $w(I)$ of $I$ is defined by $w(I)=b-a$. If an interval $I$ is given over $\mathbb{C}$, we define the \emph{absolute value} of $I$ by $|I|=\max\limits_{x\in I}|x|$.  For an $n$-dimensional interval box $I=(I_1,\dots, I_n)$, the \emph{max norm} is defined by $\|I\|=\max\limits_{i=1,\dots, n}|I_i|$. 
If $I=(I_1,\dots, I_n)$ is a \emph{square} $n$-dimensional interval box over $\mathbb{C}$, i.e.\  one satisfying $$w(\Re(I_1))=w(\Im(I_1))=\cdots=w(\Re(I_n))=w(\Im(I_n)),$$
then we define the \emph{radius} of $I$ by $\frac{w(\Re(I_i)}{2}$ for any $i=1,\dots,n$.
We also define an \emph{interval matrix} whose entries are given by intervals. Note that an $m\times n$ interval matrix $M$ can be considered as a set of $m\times n$ matrices whose $ij$ entry is contained in the interval $M_{ij}$. The \emph{interval matrix norm} $\|M\|$ is defined by the maximum operator norm of a matrix in $M$ under the max norm. In other words, $\|M\|=\max\limits_{A\in M}\max\limits_{x\in \mathbb{C}^n}\frac{\|Ax\|}{\|x\|}$ where $\|x\|=\max\limits_{i=1,\dots, n}|x_i|$.

\subsection{Krawczyk method}

The Krawczyk method combines interval arithmetic and the generalized Newton's method to prove the existence and uniqueness of a solution within a region for a square system of equations. Although the Krawczyk method is introduced only for real variables in most literature, we state the result in the complex setting. Subtle differences in the Krawczyk method in the complex setting are introduced and analyzed in \cite{burr2019effective}. 

Let $F:\mathbb{C}^n\rightarrow\mathbb{C}^n$ be a polynomial system. For a point $x\in \mathbb{C}^n$, an $n$-dimensional interval vector $I$ and an invertible matrix $Y$, we define the Krawczyk operator 
\[K_{x,Y}(I):= x-Y\cdot F(x)+(\boldsymbol{1}_n-Y\cdot \square JF(I))\cdot(I-x)\]
where $\boldsymbol{1}_n$ is the $n\times n$ identity matrix. We have the following theorem.

\begin{thm}\cite{krawczyk1969newton}\label{thm:Krawczyk}
Suppose that $F:\mathbb{C}^n\rightarrow\mathbb{C}^n$ is a square differentiable system with a given interval extension $\square JF(I)$ on an interval $I$. For an $n\times n$ invertible matrix $Y$ and a point $x$,
\begin{enumerate}
\item \label{thm:KrawczykExistence} if $K_{x,Y}(I)\subset I$, then $I$ contains a solution $x^\star$ of $F$, and 
\item \label{thm:KrawczykUniqueness} if additionally $\sqrt{2}\| \boldsymbol{1}_n-Y\cdot\square JF(I)\|<1$, then the solution  $x^\star$ in $I$ is unique.
\end{enumerate}
\end{thm}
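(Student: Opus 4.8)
The statement to prove is Theorem~\ref{thm:Krawczyk}, the classical Krawczyk fixed-point theorem adapted to the complex setting. The plan is to reduce everything to a contraction-mapping / degree-theory argument for the real-variable Newton-like map underlying the Krawczyk operator, taking care of the factor $\sqrt{2}$ that arises when we identify $\CC^n$ with $\RR^{2n}$.

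\textbf{Step 1: Set up the Newton-like map.} Fix the point $x$ and the invertible matrix $Y$, and define $N(y) := y - Y\cdot F(y)$ for $y \in \CC^n$. A point $y^\star \in I$ is a zero of $F$ if and only if it is a fixed point of $N$, since $Y$ is invertible. The Krawczyk operator $K_{x,Y}(I)$ is, by construction, an interval enclosure of the set $\{N(y) : y \in I\}$: writing $N(y) = N(x) + \int_0^1 JN(x + s(y-x))\,ds\cdot (y-x)$ and using $JN = \boldsymbol{1}_n - Y\cdot JF$, we get $N(y) \in x - Y F(x) + (\boldsymbol{1}_n - Y\cdot\square JF(I))(I - x) = K_{x,Y}(I)$ by the inclusion property of the interval extension $\square JF$ and of interval arithmetic. (In the complex setting one should interpret the mean-value/Hadamard step via the real Jacobian on $\RR^{2n}$, or equivalently via a segment integral of the complex Jacobian, which is where \cite{burr2019effective} is relevant.)

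\textbf{Step 2: Existence via Brouwer.} If $K_{x,Y}(I)\subset I$, then by Step 1, $N$ maps the box $I$ (a nonempty compact convex subset of $\CC^n \cong \RR^{2n}$) continuously into itself. Brouwer's fixed-point theorem gives a fixed point $y^\star\in I$ of $N$, hence a zero $x^\star := y^\star$ of $F$ in $I$. This proves part~\eqref{thm:KrawczykExistence}.

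\textbf{Step 3: Uniqueness via contraction.} For any two points $y, z\in I$, Step 1's mean-value representation gives $N(y) - N(z) = M\cdot(y-z)$ for some matrix $M$ in the interval matrix $\boldsymbol{1}_n - Y\cdot\square JF(I)$ (more precisely $M = \int_0^1 (\boldsymbol 1_n - Y JF(z + s(y-z)))\, ds$, which lies in this interval matrix by convexity of the enclosure). Hence $\|N(y)-N(z)\| \le \|\boldsymbol{1}_n - Y\cdot\square JF(I)\|\cdot\|y - z\|$ in the max norm on $\CC^n$. The subtlety is that the interval-matrix norm $\|\cdot\|$ as defined acts on complex vectors, but the mean-value inequality for a complex-linear map $M$ is cleanest over $\RR^{2n}$; passing between the complex max norm and the real max norm on $\RR^{2n}$ costs a factor of at most $\sqrt{2}$ (since $|a+bi|\le \sqrt 2\max(|a|,|b|)$). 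This is exactly the origin of the hypothesis $\sqrt 2\,\|\boldsymbol 1_n - Y\cdot\square JF(I)\| < 1$: it guarantees $N$ is a contraction on $I$ in the relevant norm, so it has at most one fixed point there. Combined with Step 2, the zero $x^\star$ is unique in $I$, proving part~\eqref{thm:KrawczykUniqueness}.

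\textbf{Main obstacle.} The routine parts are the interval-enclosure bookkeeping and the two fixed-point invocations; the delicate point is handling the complex-vs-real norm comparison rigorously so that the constant $\sqrt 2$ (and no larger constant) is correct, and making sure the mean-value inclusion $M \in \boldsymbol 1_n - Y\cdot\square JF(I)$ is legitimate — this requires the interval extension $\square JF(I)$ to be a convex enclosure containing all Jacobians $JF(\xi)$ for $\xi$ on the segment from $z$ to $y$, together with the fact that an average of matrices in a convex set stays in that set. I would cite \cite{burr2019effective} for the precise complex-setting statement of these norm comparisons and defer to \cite{krawczyk1969newton,moore2009introduction} for the classical real case.
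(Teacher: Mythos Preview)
The paper does not give its own proof of this theorem: it is quoted from \cite{krawczyk1969newton}, with the complex-setting refinements attributed to \cite{burr2019effective}. So there is no ``paper's proof'' to compare against; your sketch is essentially the standard textbook argument (Newton-like map $N(y)=y-YF(y)$, Brouwer for existence, contraction for uniqueness), and Steps~1--2 are fine.

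The weak spot is your treatment of the $\sqrt{2}$ factor in Step~3. Your own argument, as written, does \emph{not} actually need it: the integral mean-value representation gives a genuine complex matrix $M=\int_0^1 J N(z+s(y-z))\,ds$, and since each $JN(\xi)=\boldsymbol 1_n - Y\,JF(\xi)$ lies in the (convex) complex interval matrix $\boldsymbol 1_n - Y\cdot\square JF(I)$, so does $M$. Then $\|N(y)-N(z)\|\le \|\boldsymbol 1_n - Y\cdot\square JF(I)\|\cdot\|y-z\|$ holds directly in the complex max norm, with no norm-change and no $\sqrt 2$. Your stated reason for the $\sqrt 2$ (``the mean-value inequality for a complex-linear map $M$ is cleanest over $\RR^{2n}$'') is not right: the operator-norm inequality $\|Mv\|\le\|M\|\,\|v\|$ is a tautology in any normed space. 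The $\sqrt 2$ in \cite{burr2019effective} arises from a \emph{different} uniqueness argument, namely applying the real componentwise mean-value theorem on $\RR^{2n}$, which produces a $2n\times 2n$ real matrix that need not come from a complex $n\times n$ matrix (the Cauchy--Riemann structure is broken because each of the $2n$ rows uses its own intermediate point); bounding that matrix in terms of the complex interval matrix is where the $\sqrt 2$ enters. You should either (i) switch to that argument so your proof actually uses the stated hypothesis, or (ii) keep your integral argument and remark that it establishes uniqueness under the weaker hypothesis $\|\boldsymbol 1_n - Y\cdot\square JF(I)\|<1$, deferring to \cite{burr2019effective} for why the paper states the $\sqrt 2$ version.
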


Note that the first part of the theorem proves the existence of the solution in the region $I$, and the second part proves its uniqueness. The $\sqrt{2}$ factor in \Cref{thm:Krawczyk} (\ref{thm:KrawczykUniqueness}) is used for the Krawczyk method in the complex setting. When the theorem is applied to inputs over the real, satisfying $\|\boldsymbol{1}_n-Y\cdot \square JF(I)\|<1$ is sufficient to prove the uniqueness.

In the actual application of the theorem, the invertible matrix $Y$ is chosen to minimize the norm $\|\boldsymbol{1}_n-Y\cdot \square JF(I)\|$. In the absence of additional details about the system $F$, it is common to use the midpoint of a given interval $I$ as the value for $x$ and $JF(x)^{-1}$ for $Y$. Also, the interval extension $\square F(x)$ often replaces $F(x)$ since exactly evaluating $F(x)$ may not be feasible in usual cases.


\section{Algorithms}\label{sec:algorithms}

We present the algorithmic framework for certified homotopy continuation using the Krawczyk method. Two algorithms are proposed depending on the path prediction strategy. The first algorithm is a prototypical version of the Krawczyk homotopy continuation based on the constant predictor. The second algorithm adopts a preconditioning step for more sophisticated path prediction. Elaborating on each step in the first algorithm, we describe the main idea of the Krawczyk homotopy. After that, the algorithm with the preconditioning step is proposed to improve the first algorithm.
Both variants rely on three hyper-parameters which must be set in advance: initial values for the step-size $dt$ and a radius parameter $r$ controlling the sizes of interval boxes, and a scaling parameter $\lambda$ used to update these values.

In \ifthenelse{\mainfilecheck{1} > 0}{actual}{}
applications, we are often interested in systems with parameters, $F(x;p):\CC^n\times \CC^m \to \CC^n .$
The case of the homotopy $H(x,t)=0$ is a special case with $p=t$ and $m=1.$
On the other hand, systems with parameters are often solved using \emph{parameter homotopies}~\cite[Chapter 8]{SommeseWampler:2005}.
For two points $p_0,p_1\in \mathbb{C}^m$, we consider a path $p(t)$ in the parameter space such that $p(0)=p_0$ and $p(1)=p_1$. We define a parameter homotopy $H(x,t)=F(x;p(t))$. Let $x(t)$ be a solution path of homotopy $H(x,t)$. We assume that the solution path $x(t)$ is nonsingular; that is, the Jacobian $JF(x(t);p(t))$ is invertible for all $t\in [0,1]$. In a typical application of parameter homotopies, we further assume that solutions of the start system $F(x;p_0)$ are known in advance at least approximately.

The goal of the Krawczyk homotopy algorithm is to construct a finite sequence of time-steps $0=t^{(0)}< t^{(1)}<\cdots < t^{(k)}=1$ and a collection of $k$ interval boxes $I^{(1)},\dots, I^{(k)}$ contained in $\mathbb{C}^n$ such that each $I^{(i)}\times[t^{(i-1)},t^{(i)}]$ is verified to enclose only a single solution path $x(t)$ from $t=t^{(i-1)}$ to $t=t^{(i)}$. By accomplishing this goal, we obtain a certified solution to $F(x;p_1)$ through the refinement of a point in the last interval box $I^{(k)}$. We point out that each $t^{(i)}$ and $I^{(i)}$ are computed by previously obtained $t^{(i-1)}$ and $I^{(i-1)}$. In each subsection, we elaborate on steps from computing the time sequence and collection of interval boxes to finalizing the algorithm.

\subsection{Initialization step}\label{subsec:init}
    The algorithm first initializes an interval box containing the known solution and time-step to compute the next interval box. 
    For a point $x_0\in\mathbb{C}^n$ approximating a solution $x(0)$ of $H(x,0)=F(x;p_0)$, we construct an $n$-dimensional interval box $I^{(1)}$ enclosing $x_0$. In addition, we wish to have a proper $t^{(1)}\in (0,1)$ such that $I^{(1)}\times [0,t^{(1)}]$ contains the solution path $x(t)$ uniquely from $t=0$ to $t=t^{(1)}$. In general, information for an effective guess of $I^{(1)}$ and $t^{(1)}$ may not be available. Hence, the interval box $I_r$ with the midpoint $x_0$ and the radius $r$ for some $r>0$ can be a natural choice for $I^{(1)}$. Furthermore, we begin with some $dt\in (0,1)$, preferably not too small or large compared to $r$, and set $t^{(1)}=dt$.
    For a simple explanation of iterative steps in the algorithm, we introduce the notations $t_0=t^{(0)},  t_1=t^{(1)}, I=I^{(1)}$ and write $I$ in place of $I_r$.

\subsection{Krawczyk test step}\label{subsec:2}

The main task for this step is proving the existence and uniqueness of the solution path, i.e.~$(x(t), t) \in I\times [t_0,t_1]$ for all $t\in [t_0, t_1],$ with the interval box $I$ and $t_0, t_1$ obtained from the previous step. We establish a parametric version of the Krawczyk method to certify all points in a certain path defined on some closed time interval. 

Let us consider a homotopy $H(x,t):\mathbb{C}^n\times [0,1]\rightarrow \mathbb{C}^n$ with a parameter $t\in [0,1]$ and a solution path $x(t)$. For an $n$-dimensional interval box $I$ in $\mathbb{C}^n$ and an interval $T \subset [0,1]$, the parametric Krawczyk method applies the Krawczyk method on $I$ to the interval extension $\square H(x,T)$, which is obtained by evaluating $H(x,t)$ on $T$ only for $t$ variable. 
The results of the Krawczyk method with parameters are summarized in the theorem below:

\begin{thm}\label{thm:parametric-krawczyk}
    Let $H(x,t):\mathbb{C}^n\times [0,1]\rightarrow \mathbb{C}^n$. Consider intervals $I\subset \mathbb{C}^n$ and $T\subset [0,1]$. For a point $x\in \mathbb{C}^n$ and an $n\times n$-invertible matrix $Y$, define 
    \[K_{x,Y}(I,T):=x-Y\cdot\square H(x,T)+\left(\boldsymbol{1}_n-Y\cdot\square \partial_x H(I,T)\right)\cdot (I-x)\]
    where $\partial_xH$ is the Jacobian of $H$ with respect to $x$ variables.
    Then,
    \begin{enumerate}
        \item if $K_{x,Y}(I,T)\subset I$, then $I$ contains a solution to $H(x,t)$ for each $t\in T$, and 
        \item if additionally $\sqrt{2}\left\|\boldsymbol{1}_n-Y\cdot \square \partial_x H(I,T)\right\|<1$, then $I$ contains a unique solution to $H(x,t)$ for each $t\in T$.
    \end{enumerate}
\end{thm}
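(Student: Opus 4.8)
The plan is to reduce \Cref{thm:parametric-krawczyk} to the classical Krawczyk theorem (\Cref{thm:Krawczyk}) by freezing the parameter $t$ and invoking inclusion isotonicity of interval arithmetic. Fix $t\in T$ and write $F_t := H(\,\cdot\,,t):\CC^n\to\CC^n$, so that a solution of $H(x,t)=0$ in $I$ is exactly a zero of $F_t$ in $I$. First I would check that $\square\partial_x H(I,T)$ is a legitimate interval extension of the Jacobian $JF_t$ over $I$: by definition it encloses $\{\partial_x H(x',t') : x'\in I,\ t'\in T\}$, hence in particular the subset $\{JF_t(x') : x'\in I\}$ obtained by restricting to $t'=t$. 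Thus we may take $\square JF_t(I) := \square\partial_x H(I,T)$ as the interval extension appearing in \Cref{thm:Krawczyk}, and likewise $H(x,t)\in\square H(x,T)$.

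Next I would compare the parametric operator $K_{x,Y}(I,T)$ with the ordinary Krawczyk operator for $F_t$ built from these data, namely
\[
K^{F_t}_{x,Y}(I) \;=\; x - Y\cdot H(x,t) + \bigl(\boldsymbol{1}_n - Y\cdot\square\partial_x H(I,T)\bigr)\cdot(I-x).
\]
Since $Y$ is a point matrix and $H(x,t)\in\square H(x,T)$, the point $-Y\cdot H(x,t)$ lies in the interval vector $-Y\cdot\square H(x,T)$; adding the point $x$ and the common interval vector $\bigl(\boldsymbol{1}_n - Y\cdot\square\partial_x H(I,T)\bigr)\cdot(I-x)$ to both sides and using monotonicity of set addition under inclusion gives $K^{F_t}_{x,Y}(I)\subseteq K_{x,Y}(I,T)$. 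Hence the hypothesis $K_{x,Y}(I,T)\subset I$ forces $K^{F_t}_{x,Y}(I)\subset I$, and \Cref{thm:Krawczyk}\,(\ref{thm:KrawczykExistence}) applied to $F_t$ produces a zero $x^\star\in I$ of $F_t$, i.e.\ a solution of $H(x,t)=0$ in $I$. Since $t\in T$ was arbitrary, this is part~(1). For part~(2), with $\square JF_t(I)=\square\partial_x H(I,T)$ the extra hypothesis $\sqrt{2}\,\|\boldsymbol{1}_n - Y\cdot\square\partial_x H(I,T)\|<1$ is precisely the uniqueness hypothesis of \Cref{thm:Krawczyk}\,(\ref{thm:KrawczykUniqueness}) for $F_t$, which then yields uniqueness of $x^\star$ in $I$; again $t\in T$ was arbitrary.

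I do not expect a serious obstacle here, since the argument is bookkeeping on top of \Cref{thm:Krawczyk}. The one step that genuinely needs to be spelled out is the inclusion isotonicity claim --- that substituting the parametric enclosures $\square H(x,T)$ and $\square\partial_x H(I,T)$ for the fixed-$t$ quantities $H(x,t)$ and $JF_t$ can only \emph{enlarge} the Krawczyk image --- because the whole reduction rests on it; this is a standard property of interval arithmetic (cf.~\cite{moore2009introduction}) but should be stated rather than assumed. A secondary point worth a remark is that \Cref{thm:parametric-krawczyk} as phrased is only a pointwise-in-$t$ statement: that the resulting family $t\mapsto x^\star_t$ of unique solutions in $I$ is continuous and coincides with a solution path $x(t)$ of the homotopy follows a posteriori from local uniqueness together with the implicit function theorem (using invertibility of the Jacobian on $I\times T$, which the uniqueness hypothesis guarantees), but is not needed for the statement itself.
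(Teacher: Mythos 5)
Your proof is correct and follows essentially the same route as the paper's: both fix $t\in T$, specialize to $F_t=H(\cdot,t)$, observe that the ordinary Krawczyk operator for $F_t$ is contained in the parametric one, and invoke \Cref{thm:Krawczyk}. Your version is slightly cleaner in that you explicitly take $\square JF_t(I):=\square\partial_x H(I,T)$, making the Jacobian terms identical so the only inclusion needed is $H(x,t)\in\square H(x,T)$; the paper instead asserts the two containments separately and leaves the isotonicity step implicit.
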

\begin{proof}
    For a fixed parameter value $t\in T$, define $F(x):= H(x,t)$. Let $K_{x,Y,F}(I)$ be the Krawczyk operator for $F$ on $I$. In this case, $K_{x,Y,F}(I)\subset K_{x,Y}(I,T)$ and $\boldsymbol{1}_n-Y\cdot \square JF(I)\subset \boldsymbol{1}_n-Y\cdot \square \partial_x H(I,T)$ for any $t\in T$. Applying \Cref{thm:Krawczyk} at each $t\in T$, the result follows.
\end{proof}

To apply the parametric Krawczyk method, we compute the invertible matrix $Y=\partial_x H(x_0,t_0)^{-1}$ and define the time interval $T_{t_0,dt}=[t_0,t_1]$. From the Krawczyk operator $K_{x_0,Y}(I, T_{t_0,dt})$, the existence and uniqueness of $x(t)$ can be verified for all $t\in T_{t_0,dt}$.

\subsection{Successful Krawczyk step}\label{subsec:proceeding}

If the parametric Krawczyk test passes, we proceed to track the solution path $x(t)$ as long as $t_1<1$. To proceed to the next iteration, we set $t_0=t_1$. For a fixed scaling constant $\lambda>1$, we update $dt=\lambda dt$, $r=\lambda r$, and $t_1 = t_0 + dt$. The purpose of scaling is for adaptive choice of both $dt$ and $r$. If the Krawczyk test from the previous step is successful, it may be feasible to proceed with a larger step size $dt$, thereby facilitating rapid path tracking. However, a relatively larger $dt$ compared to $r$ can increase the possibility of failure of the Krawczyk test; hence $r$ should be scaled similarly. The importance of this simultaneous scaling of $dt$ and $r$ is mentioned again in the proof of \Cref{thm:tilted-existence}.

After the scaling of $dt$ and $r$, we apply Newton's method at the midpoint of $I_r$ to update a point $x_0$ approximating the solution $x(t_0)$ to $H(x,t_0)$. After that, repeat the Krawczyk test step.

\subsection{Failed Krawczyk step}\label{subsec:refinement}

There are scenarios in which the Krawczyk test fails. The existence test fails when the solution path deviates from the interval box $I_r$ for some $t\in T_{t_0,dt}$. On the other hand, the uniqueness test might fail if another solution path enters $I$ at some $t\in T_{t_0,dt}$. These scenarios may be resolved by updating $dt=\frac{1}{\lambda}dt$ and $r=\frac{1}{\lambda}r$, and repeating the parametric Krawczyk test.

\subsection{Finalization step}\label{subsec:final}
Assume that the previous Krawczyk test succeeds with an updated value of $t_0\ge 1.$ 
In this case, we refine the midpoint of $I_r$ with the system $H(x,1)=F(x;p_1)$ using Newton's method, and return the refined solution.
The process described in~\Cref{subsec:init,subsec:2,subsec:proceeding,subsec:refinement,subsec:final} is summarized in~\Cref{algo:krawczyk_homotopy_rec}, and illustrated in~\Cref{fig:rect}.

\begin{figure}
\centering
    \begin{tikzpicture}[rotate around x=0,rotate around y=45, rotate around z=0,scale=1]
\draw[line width=0.1mm,color=red] (1,1.5,4.5) -- (2,1.5,4.5);

\draw[line width=0.2mm] (1,.5,5.5) -- (2,.5,5.5);
\draw[line width=0.2mm] (1,.5,5.5) -- (1,2.5,5.5);
\draw[line width=0.2mm] (2,2.5,5.5) -- (1,2.5,5.5);
\draw[line width=0.2mm] (2,2.5,5.5) -- (2,.5,5.5);
\draw (1,1.5,4.5) node {$\bullet$};

\draw[line width=0.2mm] (1,2.5,5.5) -- (1,2.5,3.5);
\draw[line width=0.2mm] (2,2.5,3.5) -- (1,2.5,3.5);
\draw[line width=0.2mm] (2,2.5,3.5) -- (2,2.5,5.5);
\draw[line width=0.2mm] (1,.5,3.5) -- (1,.5,5.5);
\draw[line width=0.1mm,color=red] (2,2,4.5) -- (4,2,4.5);
\draw[<-,line width=0.5mm,dotted,color=red] (2,2,4.5) -- (2,1.5,4.5);
\draw (2,2,4.5) node {$\bullet$};

\draw[line width=0.2mm] (1,.5,3.5) -- (1,2.5,3.5);
\draw[line width=0.2mm] (1,.5,3.5) -- (1,.5,5.5);
\draw[line width=0.3mm,dashed] (2,.5,5.5) -- (2,.5,3.5);
\draw[line width=0.3mm,dashed] (2,2.5,3.5) -- (2,.5,3.5);
\draw[line width=0.3mm,dashed] (2,.5,3.5) -- (1,.5,3.5);

\draw[line width=0.2mm] (2,3.3,3) -- (2,3.3,6);
\draw[line width=0.2mm] (2,.7,6) -- (2,3.3,6);
\draw[line width=0.3mm,dotted] (2,.7,5.5) -- (2,.7,3);
\draw[line width=0.2mm] (2,.7,5.5) -- (2,.7,6);
\draw[line width=0.2mm] (2,3.3,3) -- (2,2.32,3);
\draw[line width=0.3mm,dotted] (2,2.32,3) -- (2,.7,3);

\draw[line width=0.2mm] (2,.7,6) -- (4,0.7,6);
\draw[line width=0.2mm] (2,3.3,6) -- (4,3.3,6);
\draw[line width=0.2mm] (4,0.7,6) -- (4,3.3,6);

\draw[line width=0.2mm] (2,3.3,3) -- (4,3.3,3);
\draw[line width=0.2mm] (4,3.3,6) -- (4,3.3,3);
\draw[line width=0.3mm,dotted] (4,3.3,3) -- (4,.7,3);
\draw[line width=0.3mm,dotted] (4,0.7,6) -- (4,0.7,3);
\draw[line width=0.3mm,dotted] (2,.7,3) -- (4,0.7,3);
\draw[->,line width=0.5mm,dotted,color=red] (4,2,4.5) -- (4,1,4.5);
\draw (4,1,4.5) node {$\bullet$};


\draw[line width=.4mm]
plot[variable=\x,domain=.5:5,samples=73,smooth] 
 (\x,{7/60 *\x^3-23/20 *\x^2+47/15*\x-3/5},4.5);
\draw[line width=.4mm,dotted]
plot[variable=\x,domain=.4:.5,samples=73,smooth] 
 (\x,{7/60 *\x^3-23/20 *\x^2+47/15*\x-3/5},4.5);
\draw[line width=.4mm,dotted]
plot[variable=\x,domain=5:5.5,samples=73,smooth] 
 (\x,{7/60 *\x^3-23/20 *\x^2+47/15*\x-3/5},4.5);

\draw[line width=0.4mm,dotted] (2,.5,5.5) -- (2,0,5.5);
\draw[line width=0.4mm,dotted] (1,.5,5.5) -- (1,0,5.5);
\draw[line width=0.4mm,dotted] (4,.7,6) -- (4,0,6);

\draw[line width=0.3mm,dashed] (1,0,7) -- (1,0,5.5);
\draw[line width=0.3mm,dashed] (2,0,7) -- (2,0,5.5);
\draw[line width=0.3mm,dashed] (4,0,7) -- (4,0,6);

\draw[->,line width=0.1mm] (-1.5,0,7) -- (6.5,0,7);
\draw (6.5,-.1,7.3) node {$t$};

\draw (-1,0,7) node {$\bullet$};
\draw (-1,-.15,7.3) node {$t=0$};

\draw (.9,-.1,7.5) node {$t^{(i-1)}$};
\draw (2.3,-.1,8) node {$t^{(i)}=t^{(i-1)}+dt$};
\draw (4.3,-.1,8) node {$t^{(i+1)}=t^{(i)}+\lambda dt$};
\draw (5.7,0,7) node {$\bullet$};
\draw (5.7,-.15,7.3) node {$t=1$};

\end{tikzpicture}
    \caption{An illustration of \Cref{algo:krawczyk_homotopy_rec}. Solid red lines represent the midpoint of each interval box $I_r$. Dotted red lines show the corrector step producing the next midpoint.}\label{fig:rect}
\end{figure}
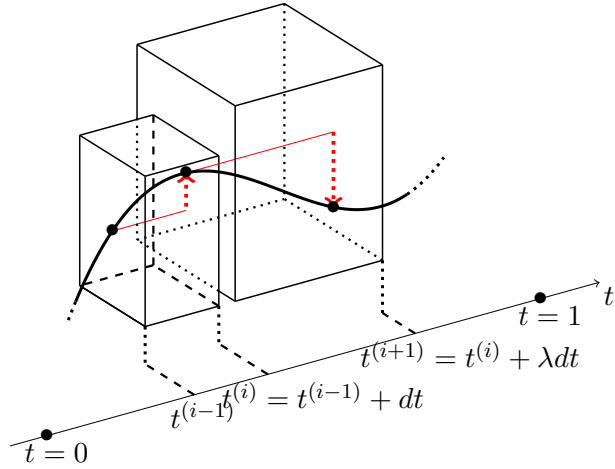

\algrenewcommand\algorithmicrequire{\textbf{Input}:}
\algrenewcommand\algorithmicensure{\textbf{Output}:}
\begin{algorithm}[ht]
	\caption{Krawczyk homotopy}
 \label{algo:krawczyk_homotopy_rec}
\begin{algorithmic}[1]
\Require  
\begin{itemize}
    \item A parameter homotopy $H(x,t)=F(x;p(t)):\mathbb{C}^n\times[0,1]\rightarrow\mathbb{C}^n$ analytic in $x$ and linear in $p$,
    \item a point $x_0$ approximating $x(0)$, for some nonsingular solution path $x(t):[0,1] \to \CC^n$ such that $H(x(t),t)=0$,
    \item a positive number $r>0$ for the initial radius,
    \item a time-step size $dt\in (0,1)$, and
    \item a scaling constant $\lambda>1$.
\end{itemize}
\Ensure A certified approximation of $x(1)$.
\State {Define an interval box $I_r$ centered at $x_0$ with radius $r$.\\
Set $t_0=0,t_1=dt$ and $T_{t_0,dt}=[t_0,t_1]$.\\
Compute $Y:=\partial_x H(x_0,t_0)^{-1}$.}
\While{$t_0<1$}
\State{Run Krawczyk test with $K_{x_0,Y}(I_r,T_{t_0,dt})$.}
\If{Krawczyk test passed}
\State{Set $r=\lambda r$ and $dt=\lambda dt$.}
\State{Set $t_0=t_1$, and $t_1=t_0+dt$.}
\State{Refine the midpoint of $I_r$ with $H(x,t_0)$ to approximate $x(t_0)$ and set it as $x_0$.}
\State{Compute $Y:=\partial_x H(x_0,t_0)^{-1}$.}
\State{Set an interval box $I_r$ centered at $x_0$ with radius $r$, and $T_{t_0,dt}=[t_0,t_1]$.}
\Else
\State{Set $r=\frac{1}{\lambda} r$ and $dt=\frac{1}{\lambda} dt$.}
\State{Set $t_1=t_0+dt$.}
\State{Set an interval box $I_r$ centered at $x_0$ with radius $r$, and $T_{t_0,dt}=[t_0,t_1]$.}
\EndIf
\EndWhile
\State{Refine the midpoint of $I_r$ with $H(x,1)$, and return it.}
 \end{algorithmic}
 \end{algorithm}

\subsection{Preconditioning step (\Cref{algo:preconditioning,algo:krawczyk_homotopy})}

Note that \Cref{algo:krawczyk_homotopy_rec} employs the interval $I\times T$ in a rectangular shape. This approach implicitly assumes that the midpoint of $I_r$ is close enough to the solution path $x(t)$ for all $t\in T_{t_0,dt}$. When the solution path rapidly changes, the algorithm requires frequent reduction of $dt$ and $r$, resulting in slow tracking progress. The preconditioning step discussed in this section adopts more proactive and efficient path prediction for an improved algorithm.   

The preconditioning step from  $t=t_0$ to $t=t_1$ is summarized in~\Cref{algo:preconditioning} below. 
This preconditioning step will be executed before every step involving a Krawczyk test. We assume that an approximation $x_0$ of $x(t_0)$ is known, and $I_r$ is an $n$-dimensional interval box whose midpoint is the origin and radius is $r$. Furthermore, we have $t_1=t_0+dt$ for some $dt$ from the previous step. 

We find a point $x_1$ approximating $x(t_1)$ using the predictor-corrector method. Define the line segment $s(t)$ in $\mathbb{C}^n\times[0,1]$ such that $s(t_0)=x_0$ and $s(t_1)=x_1$. We use this as a prediction of $x(t)$ from $t=t_0$ to $t=t_1$. Compared to \Cref{algo:krawczyk_homotopy_rec}, we define the \emph{tilted interval} to be the Minkowski sum $s(t)+I_r$. 
Just as interval boxes in previous sections used approximate solutions as midpoints, the tilted interval encloses the line segment $s(t).$

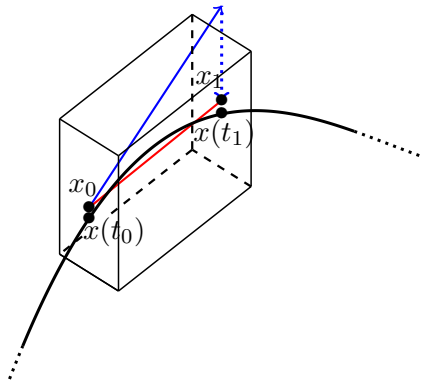
\begin{figure}
    \centering
    \begin{tikzpicture}[rotate around x=0,rotate around y=45, rotate around z=0,scale=1.8]
\draw[->,line width=0.7mm,thick,color=blue] (1,1.58,4.5) -- (2,2.8,4.5);
\draw[<-,line width=0.4mm,dotted,color=blue] (2,2.1,4.5) -- (2,2.8,4.5);
\draw[line width=0.7mm,thick,color=red] (1,1.58,4.5) -- (2,2.1,4.5);

\draw[line width=0.2mm] (1,1.1,5) -- (2,1.6,5);
\draw[line width=0.2mm] (1,1.1,5) -- (1,2.1,5);
\draw[line width=0.2mm] (2,2.6,5) -- (1,2.1,5);
\draw[line width=0.2mm] (2,2.6,5) -- (2,1.6,5);
\draw (1,1.5,4.5) node {$\bullet$};
\draw (1,1.58,4.5) node {$\bullet$};

\draw[line width=0.2mm] (1,2.1,5) -- (1,2.1,4);
\draw[line width=0.2mm] (2,2.6,4) -- (1,2.1,4);
\draw[line width=0.2mm] (2,2.6,4) -- (2,2.6,5);
\draw[line width=0.2mm] (1,1.1,4) -- (1,1.1,5);
\draw (2,2,4.5) node {$\bullet$};
\draw (2,2.1,4.5) node {$\bullet$};

\draw[line width=0.2mm] (1,1.1,4) -- (1,2.1,4);
\draw[line width=0.2mm] (1,1.1,4) -- (1,1.1,5);
\draw[line width=0.3mm,dashed] (2,1.6,5) -- (2,1.6,4);
\draw[line width=0.3mm,dashed] (2,2.6,4) -- (2,1.6,4);
\draw[line width=0.3mm,dashed] (1.17,1.185,4) -- (1,1.1,4);
\draw[line width=0.3mm,dashed] (2,1.6,4) -- (1.36,1.28,4);


\draw[line width=.4mm]
plot[variable=\x,domain=.5:3,samples=73,smooth] 
 (\x,{7/60 *\x^3-23/20 *\x^2+47/15*\x-3/5},4.5);
\draw[line width=.4mm,dotted]
plot[variable=\x,domain=.4:.5,samples=73,smooth] 
 (\x,{7/60 *\x^3-23/20 *\x^2+47/15*\x-3/5},4.5);
\draw[line width=.4mm,dotted]
plot[variable=\x,domain=3:3.5,samples=73,smooth] 
 (\x,{7/60 *\x^3-23/20 *\x^2+47/15*\x-3/5},4.5);

\draw (1, 1.7, 4.4) node {$x_0$};
\draw (1.1, 1.43, 4.7) node {$x(t_0)$};
\draw (2, 2.2, 4.3) node {$x_1$};
\draw (2, 1.87, 4.55) node {$x(t_1)$};

\end{tikzpicture}
    \caption{An illustration of the preconditioning in~\Cref{algo:preconditioning}. The point $x_0$ is an approximation of $x(t_0)$. The blue line represents the predictor step, and the blue dotted line represents the corrector step to get an approximation $x_1$ of $x(t_1)$. The line segment $s(t)$ connecting $x_0$ and $x_1$ is presented by the red line. The tilted interval box is centered at $s(t)$ at each $t\in [t_0,t_1]$ with the same radius.}\label{fig:precon}
\end{figure}

Note that the shape of this tilted interval will be a parallelepiped so that each edge of the interval can be represented by some linear function in $t$. 
Intuitively, tilting seems to offer the advantage of a ``first-order'' approximation of the solution path.
However, applying the Krawczyk method on a tilted interval box might incur significant overestimation due to the conservative nature of interval arithmetic. 
To prevent this issue, we define a new homotopy $\hat{H}(x,t)=H(x+s(t),t)$ which is obtained by change of coordinates via the shearing map $(x, t)\mapsto (x+s(t), t)$. This new homotopy satisfies $\hat{H}(0,t_0)=\hat{H}(0,t_1)=0$. 
In the transformed coordinates, the line segment $s(t)$ is parametrized by $(0,\dots,0,t)$ for $t\in [t_0,t_1]$. We may then apply the parametric Krawczyk method on $I_r$ to $\square\hat{H}(x,[t_0,t_1])$. This step is described in \Cref{algo:preconditioning} and illustrated in \Cref{fig:precon}. After this pre-processing, we conduct the Krawczyk test step.

\begin{algorithm}[ht]
	\caption{Preconditioning}
 \label{algo:preconditioning}
\begin{algorithmic}[1]
\Require  \begin{itemize}
    \item A parameter homotopy $H(x,t)=F(x;p(t)):\mathbb{C}^n\times[0,1]\rightarrow\mathbb{C}^n$ analytic in $x$ and linear in $p$,
    \item a point $x_0$ approximating $x(0)$, for some nonsingular solution path $x(t):[0,1] \to \CC^n$ such that $H(x(t),t)=0$,
    \item a positive number $r>0$ for the radius, and
    \item two positive constants $t_0, t_1\in [0,1]$ with $t_0<t_1$.
\end{itemize}
\Ensure 
\begin{itemize}
    \item A point $x_1$ approximating $x(t_1)$,
    \item a homotopy $\hat{H}(x,t)$, 
    \item an interval box $I_r$, and 
    \item a time interval $T_{t_0,dt}\subset[1,0]$.
\end{itemize}
\State {Find a point $x_1$ approximating $x(t_1)$ using the predictor-corrector method.\\
Compute the line segment $s(t)$ such that $s(t_0)=x_0$ and $s(t_1)=x_1$.\\
Define $\hat{H}(x,t)=H(x+s(t),t)$ so that $\hat{H}(0,t_0)$ and $\hat{H}(0,t_1)$ approximate $0$.\\
Set an interval vector $I_r$ centered at $0$ with the radius $r$, and a time interval $T_{t_0,dt}=[t_0,t_1]$.}
\State {Return $x_1,\hat{H}(x,t), I_r$ and $T_{t_0,dt}$.}
\end{algorithmic}
\end{algorithm}

Compared to the steps discussed in \Cref{subsec:proceeding,subsec:refinement}, there are subtle differences when the preconditioning step is employed. The process of preconditioning involves finding an approximation $x_1$ of $x(t_1)$. Since this process is executed in advance, refining the midpoint of $I_r$ is no longer necessary when proceeding towards larger $t.$
In addition, the preconditioning step must be conducted regardless of the success or failure of the Krawczyk test since $t_1$ must always be updated.
With these caveats, the complete ``tilted'' variant of the Krawczyk homotopy using the preconditioning is described in~\Cref{algo:krawczyk_homotopy}.
See~\Cref{fig:tilt} for an illustration.




\begin{algorithm}[ht]
	\caption{Krawczyk homotopy (tilted)}
 \label{algo:krawczyk_homotopy}
\begin{algorithmic}[1]
\Require 
 \begin{itemize}
    \item A parameter homotopy $H(x,t)=F(x;p(t)):\mathbb{C}^n\times[0,1]\rightarrow\mathbb{C}^n$ analytic in $x$ and linear in $p$,
    \item a point $x_0$ approximating $x(0)$, for some nonsingular solution path $x(t):[0,1] \to \CC^n$ such that $H(x(t),t)=0$,
    \item a positive number $r>0$ for the initial radius,
    \item a time-step size $dt\in (0,1)$, and
    \item a scaling constant $\lambda>1$.
\end{itemize}
\Ensure A certified approximation of $x(1)$.
\State {Set $t_0=0$ and $t_1=dt$.\\
Run \textbf{Preconditioning}$(H(x,t),r,t_0,t_1, x_0)$ to compute $x_1,\hat{H}(x,t),I_r$ and $T_{t_0,dt}$.\\
Compute $Y:=\partial_x \hat{H}(0,t_0)^{-1}$.}
\While{$t_0<1$}
\State{Run Krawczyk test with $K_{0,Y}(I_r,T_{t_0,dt})$.}
\If{Krawczyk test passed}
\State{Set $r=\lambda r$ and $dt=\lambda dt$.}
\State{Refine $x_1$}
\State{Set $x_0=x_1,t_0=t_1$, and $t_1=t_0+dt$.}
\State{Run \textbf{Preconditioning}$(H(x,t),r,t_0,t_1, x_0)$ to compute $x_1,\hat{H}(x,t),I_r$ and $T_{t_0,dt}$.}
\State{Compute $Y:=\partial_x \hat{H}(0,t_0)^{-1}$.
}
\Else
\State{Set $r=\frac{1}{\lambda} r$ and $dt=\frac{1}{\lambda} dt$.}
\State{Run \textbf{Preconditioning}$(H(x,t),r,t_0,t_1, x_0)$ to compute $x_1,\hat{H}(x,t),I_r$ and $T_{t_0,dt}$.}
\EndIf
\EndWhile
\State{Find a point $x_1$ by refining $s(1)$ with the system $H(x,1)$.}
\State{Return $x_1$.}
\end{algorithmic}
\end{algorithm}

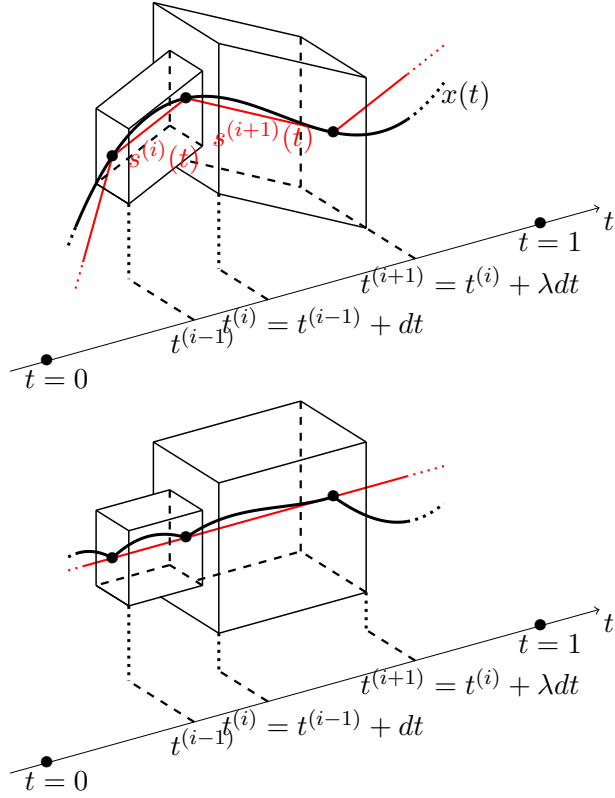
\begin{figure*}
\centering
\begin{tikzpicture}[rotate around x=0,rotate around y=45, rotate around z=0,scale=1]
\draw[line width=0.5mm,thick,color=red] (.6,.2,4.5) -- (1,1.5,4.5);
\draw[line width=0.5mm,thick,color=red,dotted] (.5,-.125,4.5) -- (.6,0.2,4.5);
\draw[line width=0.5mm,thick,color=red] (1,1.5,4.5) -- (2,2,4.5);
\draw[line width=0.5mm,thick,color=red] (4,1,4.5) -- (2,2,4.5);
\draw[line width=0.5mm,thick,color=red] (4,1,4.5) -- (5,1.5,4.5);
\draw[line width=0.5mm,thick,dotted,color=red] (5.5,1.75,4.5) -- (5,1.5,4.5);

\draw[line width=0.2mm] (1,1,5) -- (2,1.5,5);
\draw[line width=0.2mm] (1,1,5) -- (1,2,5);
\draw[line width=0.2mm] (2,2.5,5) -- (1,2,5);
\draw[line width=0.2mm] (2,2.5,5) -- (2,1.5,5);
\draw (1,1.5,4.5) node {$\bullet$};

\draw[line width=0.2mm] (1,2,5) -- (1,2,4);
\draw[line width=0.2mm] (2,2.5,4) -- (1,2,4);
\draw[line width=0.2mm] (2,2.5,4) -- (2,2.5,5);
\draw[line width=0.2mm] (1,1,4) -- (1,1,5);
\draw (2,2,4.5) node {$\bullet$};

\draw[line width=0.2mm] (1,1,4) -- (1,2,4);
\draw[line width=0.2mm] (1,1,4) -- (1,1,5);
\draw[line width=0.3mm,dashed] (2,1.5,5) -- (2,1.5,4);
\draw[line width=0.3mm,dashed] (2,2.5,4) -- (2,1.5,4);
\draw[line width=0.3mm,dashed] (2,1.5,4) -- (1,1,4);

\draw[line width=0.2mm] (2,3,3.5) -- (2,3,5.5);
\draw[line width=0.2mm] (2,1,5.5) -- (2,3,5.5);
\draw[line width=0.2mm] (2,1,4.2) -- (2,1,5.5);
\draw[line width=0.2mm] (2,3,3.5) -- (2,2.2,3.5);

\draw[line width=0.2mm] (2,1,5.5) -- (4,0,5.5);
\draw[line width=0.2mm] (2,3,5.5) -- (4,2,5.5);
\draw[line width=0.2mm] (4,0,5.5) -- (4,2,5.5);

\draw[line width=0.2mm] (2,3,3.5) -- (4,2,3.5);
\draw[line width=0.2mm] (4,2,5.5) -- (4,2,3.5);
\draw[line width=0.3mm,dashed] (4,2,3.5) -- (4,0,3.5);
\draw[line width=0.3mm,dashed] (4,0,5.5) -- (4,0,3.5);
\draw[line width=0.3mm,dashed] (2.5,.75,3.5) -- (4,0,3.5);
\draw (4,1,4.5) node {$\bullet$};


\draw[line width=.4mm]
plot[variable=\x,domain=.5:5,samples=73,smooth] 
 (\x,{7/60 *\x^3-23/20 *\x^2+47/15*\x-3/5},4.5);
\draw[line width=.4mm,dotted]
plot[variable=\x,domain=.4:.5,samples=73,smooth] 
 (\x,{7/60 *\x^3-23/20 *\x^2+47/15*\x-3/5},4.5);
\draw[line width=.4mm,dotted]
plot[variable=\x,domain=5:5.5,samples=73,smooth] 
 (\x,{7/60 *\x^3-23/20 *\x^2+47/15*\x-3/5},4.5);
\draw (5.8,1,4.5) node {$x(t)$};

\draw[line width=0.4mm,dotted] (2,1,5.5) -- (2,0,5.5);
\draw[line width=0.4mm,dotted] (1,1,5) -- (1,0,5);

\draw[line width=0.3mm,dashed] (1,0,7) -- (1,0,5);
\draw[line width=0.3mm,dashed] (2,0,7) -- (2,0,5.5);
\draw[line width=0.3mm,dashed] (4,0,7) -- (4,0,5.5);

\draw[->,line width=0.1mm] (-1.5,0,7) -- (6.5,0,7);
\draw (6.5,-.1,7.3) node {$t$};

\draw (-1,0,7) node {$\bullet$};
\draw (-1,-.15,7.3) node {$t=0$};

\draw (.9,-.1,7.5) node {$t^{(i-1)}$};
\draw (2.3,-.1,8) node {$t^{(i)}=t^{(i-1)}+dt$};
\draw (4.3,-.1,8) node {$t^{(i+1)}=t^{(i)}+\lambda dt$};
\draw (5.7,0,7) node {$\bullet$};
\draw (5.7,-.15,7.3) node {$t=1$};

\draw[color=red] (1.7,1.3,4.5) node {$s^{(i)}(t)$};

\draw[color=red] (3.05,1.2,4.5) node {$s^{(i+1)}(t)$};

\end{tikzpicture}
    \hspace{1pc}
\begin{tikzpicture}[rotate around x=0,rotate around y=45, rotate around z=0,scale=1]
\draw[line width=0.5mm,thick,dotted,color=red] (.4,1.5,4.5) -- (.6,1.5,4.5);
\draw[line width=0.5mm,thick,color=red] (1,1.5,4.5) -- (.6,1.5,4.5);
\draw[line width=0.5mm,thick,color=red] (1,1.5,4.5) -- (2,1.5,4.5);
\draw[line width=0.5mm,thick,color=red] (4,1.5,4.5) -- (2,1.5,4.5);
\draw[line width=0.5mm,thick,color=red] (4,1.5,4.5) -- (5,1.5,4.5);
\draw[line width=0.5mm,thick,dotted,color=red] (5.5,1.5,4.5) -- (5,1.5,4.5);

\draw[line width=0.2mm] (1,1,5) -- (2,1,5);
\draw[line width=0.2mm] (1,1,5) -- (1,2,5);
\draw[line width=0.2mm] (2,2,5) -- (1,2,5);
\draw[line width=0.2mm] (2,2,5) -- (2,1,5);
\draw (1,1.5,4.5) node {$\bullet$};

\draw[line width=0.2mm] (1,2,5) -- (1,2,4);
\draw[line width=0.2mm] (2,2,4) -- (1,2,4);
\draw[line width=0.2mm] (2,2,4) -- (2,2,5);
\draw[line width=0.2mm] (1,1,4) -- (1,1,5);
\draw (2,1.5,4.5) node {$\bullet$};

\draw[line width=0.2mm] (1,1,4) -- (1,2,4);
\draw[line width=0.2mm] (1,1,4) -- (1,1,5);
\draw[line width=0.3mm,dashed] (2,1,5) -- (2,1,4);
\draw[line width=0.3mm,dashed] (2,2,4) -- (2,1,4);
\draw[line width=0.3mm,dashed] (2,1,4) -- (1,1,4);

\draw[line width=0.2mm] (2,.5,5.5) -- (2,2.5,5.5);
\draw[line width=0.2mm] (2,2.5,3.5) -- (2,2.5,5.5);
\draw[line width=0.2mm] (2,.5,3.7) -- (2,.5,5.5);
\draw[line width=0.2mm] (2,2.5,3.5) -- (2,1.8,3.5);

\draw[line width=0.2mm] (2,.5,5.5) -- (4,.5,5.5);
\draw[line width=0.2mm] (2,2.5,5.5) -- (4,2.5,5.5);
\draw[line width=0.2mm] (4,.5,5.5) -- (4,2.5,5.5);

\draw[line width=0.2mm] (2,2.5,3.5) -- (4,2.5,3.5);
\draw[line width=0.2mm] (4,2.5,5.5) -- (4,2.5,3.5);
\draw[line width=0.3mm,dashed] (4,2.5,3.5) -- (4,0.5,3.5);
\draw[line width=0.3mm,dashed] (4,0.5,5.5) -- (4,0.5,3.5);
\draw[line width=0.3mm,dashed] (2.6,.5,3.5) -- (4,0.5,3.5);
\draw (4,1.5,4.5) node {$\bullet$};


\draw[line width=.4mm]
plot[variable=\x,domain=1:2,samples=73,smooth] 
 (\x,{7/60 *(\x)^3-23/20 *(\x)^2+47/15*(\x)-3/5-1/2*\x+.5},4.5);
\draw[line width=.4mm]
plot[variable=\x,domain=2:4,samples=73,smooth] 
 (\x,{7/60 *(\x)^3-23/20 *(\x)^2+47/15*(\x)-3/5+1/2*\x-1.5},4.5);
\draw[line width=.4mm]
plot[variable=\x,domain=4:5,samples=73,smooth] 
 (\x,{7/60 *(\x)^3-23/20 *(\x)^2+47/15*(\x)-3/5-1/2*\x+2.5},4.5);
\draw[line width=.4mm]
plot[variable=\x,domain=.5:1,samples=73,smooth] 
 (\x,{7/60 *\x^3-23/20 *\x^2+47/15*\x-3/5-2*\x+2},4.5);
\draw[line width=.4mm,dotted]
plot[variable=\x,domain=.4:.5,samples=73,smooth] 
 (\x,{7/60 *\x^3-23/20 *\x^2+47/15*\x-3/5-2*\x+2},4.5);
\draw[line width=.4mm,dotted]
plot[variable=\x,domain=5:5.5,samples=73,smooth] 
 (\x,{7/60 *(\x)^3-23/20 *(\x)^2+47/15*(\x)-3/5-1/2*\x+2.5},4.5);

\draw[line width=0.4mm,dotted] (2,.5,5.5) -- (2,0,5.5);
\draw[line width=0.4mm,dotted] (1,1,5) -- (1,0,5);
\draw[line width=0.4mm,dotted] (4,.5,5.5) -- (4,0,5.5);

\draw[line width=0.3mm,dashed] (1,0,7) -- (1,0,5);
\draw[line width=0.3mm,dashed] (2,0,7) -- (2,0,5.5);
\draw[line width=0.3mm,dashed] (4,0,7) -- (4,0,5.5);

\draw[->,line width=0.1mm] (-1.5,0,7) -- (6.5,0,7);
\draw (6.5,-.1,7.3) node {$t$};

\draw (-1,0,7) node {$\bullet$};
\draw (-1,-.15,7.3) node {$t=0$};

\draw (.9,-.1,7.5) node {$t^{(i-1)}$};
\draw (2.3,-.1,8) node {$t^{(i)}=t^{(i-1)}+dt$};
\draw (4.3,-.1,8) node {$t^{(i+1)}=t^{(i)}+\lambda dt$};
\draw (5.7,0,7) node {$\bullet$};
\draw (5.7,-.15,7.3) node {$t=1$};

\end{tikzpicture}

    \caption{A description of \Cref{algo:krawczyk_homotopy}. Thick black curves in each figure represent the solution path of the homotopy. Red line segments in the first figure represent $s^{(i)}(t)$ connecting $x_0$ and $x_1$ for $t\in [t^{(i-1)},t^{(i)}]$. The second figure depicts the situation when the shearing map $x\mapsto x+s^{(i)}(t)$ is applied for $t\in [t^{(i-1)},t^{(i)}]$ at each iteration. The red line in the second figure corresponds to the parametric line segment $(0,\dots, 0,t)$ in $\mathbb{C}^n\times [0,1]$.}\label{fig:tilt}
\end{figure*}

\section{Correctness and termination}\label{sec:proofs}

If the algorithms presented in Section \ref{sec:algorithms} terminate, we obtain both a region and a point within that region such that the point can be refined to an approximation of an exact solution to the system $F(x;p_1)$ to any desired accuracy. 
The correctness of the algorithms is ensured when each interval box $I^{(i)}$ encompasses only one solution path $x(t)$ for all $t\in [t^{(i-1)},t^{(i)}]$. Hence, the proof of \Cref{thm:parametric-krawczyk} also proves the correctness of the algorithms.

To prove termination for a system $F(x;p)$ with parameters $p$, we assume that the parameter homotopy $H(x,t)=F(x;p(t))$ is \emph{affine-linear}; that is, we assume $F$ is affine-linear in the parameters $p$ and $p(t)=(1-t)\cdot p_0+t\cdot p_1$ is a parametric segment. We split the system  $F(x;p)$ into two parts $F(x;p)=F_1(x;p)+F_2(x)$ where $F_1(x;p)$ consists of terms involving parameters while $F_2(x)$ is a collection of terms without parameters (hence, terms only in $x$ variables). For the homotopy $H(x,t)$, we assume a nonsingular solution path $x(t)$ from $t=0$ to $t=1$ and that a starting solution $x(0) \in \CC^n$ is known exactly.
The algorithms are guaranteed to terminate if we can prove that the solution path $x(t)$ from $t=0$ to $t=1$ can be enclosed by a finite collection of interval boxes $I^{(1)}, \dots, I^{(k)}$ constructed by either algorithm.

We first show that the parametric Krawczyk test succeeds in proving the existence and uniqueness of $x(t)$ in an interval $I$ for all values of $t$ in $[t_0,t_0+dt]\subset[0,1]$ when $dt$ and $I$ are small enough. 
Results are presented with the theoretical assumption that the exact solution $x^\star$ is known in advance. However, we also comment on the practical scenario where only an approximation of $x^\star$ is available.

\begin{thm}\label{thm:tilted-existence}
Let $H(x,t):\mathbb{C}^n\times [0,1]\rightarrow \mathbb{C}^n$ be an affine-linear homotopy, analytic with respect to $x$. Assume that we have a point $x^\star\in \mathbb{C}^n$ such that $H(x^\star,t_0)=0$ for some $t_0\in [0,1]$. Consider a fixed positive constant $L>0$ such that $L\geq \|\square \partial_x^2H(I_1,[0,1])\|,$ where $I_1$ is an interval box centered at $x^\star$ of radius $1$. Suppose that the solution path $x(t)$ is nonsingular for all $t\in T_{t_0,dt}:=[t_0,t_0+dt]$ for some $dt>0$ such that $T_{t_0,dt}\subset [0,1]$. 
Then, there exist $0<r<1$ and $0<dt<1$ such that the path $x(t)$ is uniquely contained in the interval box $I_r$ centered at $x^\star$ with the radius $r$ whenever $t\in T_{t_0,dt}$. In other words, if the constant $R=\frac{dt}{r}$ satisfies 
$$\sqrt{n}-\|Y\|\cdot R\cdot \|F_1(x^\star;p_1-p_0)\|>0$$
and 
\begin{equation}\label{eq:krawczyk-bound-3}
    \sqrt{nr^2+dt^2}<\frac{1}{\sqrt{2}\cdot\|Y\|\cdot L},
\end{equation} then $K_{x^\star,Y}(I_r,T_{t_0,dt})\subset I_r$ and
$$\|\boldsymbol{1}_n-Y\cdot \square \partial_xH(I_r,T_{t_0,dt})\|\leq \frac{1}{\sqrt{2}}.$$
\end{thm}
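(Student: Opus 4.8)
The plan is to verify the two hypotheses of \Cref{thm:parametric-krawczyk} directly for the operator $K_{x^\star,Y}(I_r,T_{t_0,dt})$, with $Y=\partial_xH(x^\star,t_0)^{-1}$ (well defined since the path is nonsingular at $t_0$) and $T:=T_{t_0,dt}=[t_0,t_0+dt]$. First I would put the affine-linear homotopy in normal form: writing $F=F_1+F_2$ with $F_1$ linear in $p$ and $p(t)=(1-t)p_0+tp_1$, one gets $H(x,t)=F(x;p(t))=F(x;p_0)+t\,F_1(x;p_1-p_0)$, so $\partial_tH(x,t)=F_1(x;p_1-p_0)$ is constant in $t$; this is what makes the residual term linear in $t$ and produces the vector $F_1(x^\star;p_1-p_0)$ in the hypothesis. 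Then I would estimate the two pieces of the Krawczyk operator separately: the residual $-Y\cdot\square H(x^\star,T)$ and the contraction $(\boldsymbol{1}_n-Y\cdot\square\partial_xH(I_r,T))\cdot(I_r-x^\star)$.

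For the contraction piece, since $Y=\partial_xH(x^\star,t_0)^{-1}$ we have $\boldsymbol{1}_n-Y\cdot\square\partial_xH(I_r,T)=Y\bigl(\partial_xH(x^\star,t_0)-\square\partial_xH(I_r,T)\bigr)$. Assuming $r<1$ so that $I_r\subseteq I_1$ and the bound $L$ applies on $I_1\times[0,1]$, a mean-value argument controlling the variation --- and the interval-extension overestimation --- of $\partial_xH$ on $I_r\times T$ by $L$ times the diameter $\sqrt{nr^2+dt^2}$ gives $\|\boldsymbol{1}_n-Y\cdot\square\partial_xH(I_r,T)\|\le\|Y\|\,L\sqrt{nr^2+dt^2}$, which is $<\frac{1}{\sqrt2}$ by \eqref{eq:krawczyk-bound-3}. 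This is the second asserted inequality; together with containment it also yields uniqueness via the second part of \Cref{thm:parametric-krawczyk}.

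For the residual piece and the containment $K_{x^\star,Y}(I_r,T)\subseteq I_r$: from $H(x^\star,t_0)=0$ and the normal form, $H(x^\star,t)=(t-t_0)\,F_1(x^\star;p_1-p_0)$, so $\|\square H(x^\star,T)\|\le dt\,\|F_1(x^\star;p_1-p_0)\|$. For $y\in K_{x^\star,Y}(I_r,T)$ the definition of the Krawczyk operator gives, up to the interval-width and interval-arithmetic overestimation constants in the matrix--vector products, $\|y-x^\star\|\le\|Y\|\,dt\,\|F_1(x^\star;p_1-p_0)\|+\|\boldsymbol{1}_n-Y\cdot\square\partial_xH(I_r,T)\|\,r$; inserting the contraction bound, dividing by $r$, and setting $R=dt/r$, containment reduces to an inequality of the shape $\|Y\|\,R\,\|F_1(x^\star;p_1-p_0)\|+\|Y\|\,L\sqrt{n+R^2}\,r\le\sqrt n$. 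Given the first displayed hypothesis $\sqrt n-\|Y\|\,R\,\|F_1(x^\star;p_1-p_0)\|>0$, this holds for all small enough $r$; shrinking $r$ further if needed so that \eqref{eq:krawczyk-bound-3} holds and $r<1$, $dt=Rr<1$, $T\subseteq[0,1]$, we obtain an admissible pair $(r,dt)$. Both hypotheses of \Cref{thm:parametric-krawczyk} then hold, so $I_r$ encloses a unique solution of $H(\cdot,t)$ for each $t\in T$; since $x^\star=x(t_0)$ is its center and $x(t)$ is the nonsingular (hence continuous, locally unique) path through it, that solution is $x(t)$, as in the correctness discussion opening \Cref{sec:proofs}.

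The hard part will be the bookkeeping in these two estimates rather than the structure: (i) justifying that the interval extension $\square\partial_xH(I_r,T)$, not merely the pointwise image of $\partial_xH$, deviates from $\partial_xH(x^\star,t_0)$ by at most $L\sqrt{nr^2+dt^2}$ in operator norm, which requires reading the second-derivative hypothesis as a bound on all relevant second partials of $H$ and requires controlling how interval arithmetic overestimates a Jacobian; and (ii) pinning down the interval-arithmetic overestimation factors in the products defining $K_{x^\star,Y}(I_r,T)$, which is precisely what forces the threshold $\sqrt n$ (rather than $1$) on the right-hand side of the first hypothesis. Once these constants are settled, the choice of $(r,dt)$ and the verification of both Krawczyk conditions are routine.
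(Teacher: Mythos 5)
Your proposal is correct and follows essentially the same route as the paper's proof: split $K_{x^\star,Y}(I_r,T)$ into the residual term bounded by $dt\,\|F_1(x^\star;p_1-p_0)\|$ via the affine-linear normal form, and the contraction term bounded by $\|Y\|\,L\,\sqrt{nr^2+dt^2}$ via Lipschitz continuity of $\partial_x H$, then observe that the resulting containment inequality is satisfiable for small $r$ once $R=dt/r$ is constrained, with uniqueness following directly from the contraction bound and \eqref{eq:krawczyk-bound-3}. The only place you diverge is in the unresolved bookkeeping constants, which you correctly flag; note that the paper's own proof actually derives the cruder sufficient condition $1-\|Y\|R\|F_1(x^\star;p_1-p_0)\|>0$ (inequality \eqref{eq:R-bound}) rather than the $\sqrt n$ threshold appearing in the theorem statement, so the $\sqrt n$ is not forced by interval overestimation in the way you speculate.
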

\begin{proof}
Take any $y\in K_{x^\star,Y}(I_r,T_{t_0,dt})$. Then, from the definition of the Krawczyk operator $K_{x,Y}$, we have
\begin{equation}\label{eq:krawczyk-goal}
y-x^\star\in-Y\cdot\square H(x^\star,T_{t_0,dt})+(\boldsymbol{1}_n-Y\cdot \square \partial_xH(I_r,T_{t_0,dt}))\cdot[-r,r]^n.
\end{equation}
Since $x^\star$ is the midpoint of $I_r$, our goal is to show $\|y-x^\star\|\leq r$.
Note that we choose $Y=\partial_xH(x^\star,t_0)^{-1}$. Therefore, 
\begin{align}
\|\boldsymbol{1}_n-Y\cdot \square \partial_xH(I_r,T_{t_0,dt})\| & = \left\|Y\cdot \Big(\partial_xH(x^\star,t_0)-\square \partial_xH(I_r,T_{t_0,dt})\Big)\right\| \nonumber \\
&\leq \|Y\|\cdot L\cdot \sqrt{nr^2+dt^2} \label{eq:krawczyk-bound-1}
\end{align}
if $r<1$ and $0<dt<1$.
The last inequality follows from the differentiability of $H$ and the Lipschitz continuity of $\partial_xH$ \cite[Section 1.5, Theorem 1.3]{1130000794961932160}. 

Because $H(x^\star,t_0)=0$, we have for any $\delta \in [0, dt]$ that
\begin{align*}
    H(x^\star,t_0+\delta)&=F(x^\star;p(t_0+\delta))\\
    &=F(x^\star;(1-t_0-\delta)\cdot p_0+(t_0+\delta)\cdot p_1)\\
    &=F(x^\star; (1-t_0)\cdot p_0 + t_0\cdot p_1)
    +
    F(x^\star ; \delta \cdot(p_1 - p_0))\\
    &= H(x^\star , t_0) + F_1\left(x^\star;\delta\cdot(p_1-p_0)\right)\\
    &=F_1\left(x^\star;\delta\cdot(p_1-p_0)\right).
\end{align*}

Therefore, we know that 
\begin{equation}\label{eq:krawczyk-bound-2}
    \|\square H(x^\star,T_{t_0,dt})\|\leq dt\cdot\|F_1(x^\star;p_1-p_0)\|.
\end{equation}
Using equations~\eqref{eq:krawczyk-bound-1},~\eqref{eq:krawczyk-bound-2})
to bound points in the interval of~\eqref{eq:krawczyk-goal}, we deduce that $K_{x^\star,Y}(I_r,T_{t_0,dt})\subset I_r$ if there are positive $r$ and $dt$ satisfying 
\[
\|Y\|\cdot dt\cdot \|F_1(x^\star;p_1-p_0)\|+\|Y\|\cdot L\cdot\sqrt{nr^2+dt^2}\cdot 2r\sqrt{n}\leq r.\]

Setting $R=\frac{dt}{r}$, we rewrite the inequality above as
\ifthenelse{\mainfilecheck{1} > 0}
{\begin{equation}\label{eq:Y-bound}
\left(\|Y\|\cdot R\cdot \|F_1(x^\star;p_1-p_0)\|-1\right)r+\\\left(\|Y\|\cdot L\cdot \sqrt{n+R^2}\cdot 2 \sqrt{n} \right)r^2\leq 0.
\end{equation}}
{\begin{multline}\label{eq:Y-bound}
\left(\|Y\|\cdot R\cdot \|F_1(x^\star;p_1-p_0)\|-1\right)r+\\\left(\|Y\|\cdot L\cdot \sqrt{n+R^2}\cdot 2 \sqrt{n} \right)r^2\leq 0.
\end{multline}}
This inequality is satisfied for a positive value of $r$ provided that $R$ is sufficiently small.
More precisely, choosing $R$ small enough that
\begin{equation}\label{eq:R-bound}
 1-\|Y\|\cdot R\cdot \|F_1(x^\star;p_1-p_0)\|>0,   
\end{equation} some positive $r$ satisfying the inequality (\ref{eq:Y-bound}) exists. 
This concludes the existence statement that $x(t)\in I_r$ for any $t\in T_{t_0, dt}.$ 

Lastly, using~\eqref{eq:krawczyk-bound-1}, we have $\sqrt{nr^2+dt^2}<\frac{1}{\sqrt{2}\cdot\|Y\|\cdot L}$ for suitably small $r$ and $dt$. This proves uniqueness of the solution path in $I_r.$
\end{proof}

Note that the theorem and its proof applies to both \Cref{algo:krawczyk_homotopy_rec,algo:krawczyk_homotopy}. When considering the case of \Cref{algo:krawczyk_homotopy}, the statement is relevant to the homotopy $\hat{H}(x,t)$ rather than $H(x,t)$. 

\begin{remark}\label{rem:inexact}
    Recall that~\Cref{thm:parametric-krawczyk} is stated for the exact solution $x^\star$. In practice, having $x^\star$ is not feasible, but we will have an approximation $x_0$. Let $I_r$ be the interval centered at $x_0$ with the radius $r$, and assume that $\|H(x_0,t_0)\|\leq \epsilon$. In this case, the inequality (\ref{eq:krawczyk-bound-1}) still holds. On the other hand, we have
    \begin{align*}
    H(x_0,t_0+\delta)&=F(x_0;p(t_0+\delta))\\
    &=F(x_0;(1-t_0-\delta)\cdot p_0+(t_0+\delta)\cdot p_1)\\
    &=F(x_0; (1-t_0) \cdot p_0 +t_0\cdot p_1)
    +
    F(x_0 ; \delta\cdot (p_1 - p_0))\\
    &= H(x_0, t_0) + F_1\left(x_0;\delta\cdot(p_1-p_0)\right).
\end{align*}
Hence, the inequality (\ref{eq:krawczyk-bound-2}) becomes
\[\|\square H(x_0,T_{t_0,dt})\|\leq \epsilon + dt\cdot \|F_1(x_0;p_1-p_0)\|\]
Introducing $R=\frac{dt}{r}$, the inequality (\ref{eq:Y-bound}) turns into
\ifthenelse{\mainfilecheck{1} > 0}
{\begin{multline*}
\|Y\|\cdot\epsilon+\left(\|Y\|\cdot R\cdot \|F_1(x_0;p_1-p_0)\|-1\right)r\\+\left(\|Y\|\cdot L\cdot \sqrt{n+R^2}\cdot 2 \sqrt{n} \right)r^2\leq 0.
\end{multline*}}
{\begin{multline*}
\|Y\|\cdot\epsilon+\left(\|Y\|\cdot R\cdot \|F_1(x_0;p_1-p_0)\|-1\right)r\\+\left(\|Y\|\cdot L\cdot \sqrt{n+R^2}\cdot 2 \sqrt{n} \right)r^2\leq 0.
\end{multline*}}
For small enough $\epsilon ,$ note that if inequality (\ref{eq:R-bound}) is satisfied, the positive values of $r$ satisfying this inequality exist and are bounded below. Hence, when replacing the interval box $I_r$, it is crucial to select one with a radius that is not excessively small. A careful discussion of refining an interval box to pass the Krawczyk test appears in~\cite{guillemot2024validated}.
\end{remark}

We provide a corollary proving the termination of the algorithms. The goal is finding a uniform lower bound for $dt$ and $r$ so that the algorithm terminates in finitely many iterations.

\begin{corollary}\label{cor:termination}
With the same hypotheses on $H(x,t)$ as in~\Cref{thm:tilted-existence},   \Cref{algo:krawczyk_homotopy} terminates in finitely many steps. 
\end{corollary}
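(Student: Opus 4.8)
The plan is to extract a uniform lower bound on the step-size $dt$ (and hence on $r$) that holds at every iteration of the algorithm, so that only finitely many successful steps are needed to reach $t=1$, and only finitely many failed steps can occur between successive successful ones. The key input is~\Cref{thm:tilted-existence}: for the preconditioned homotopy $\hat H(x,t)$, a parametric Krawczyk test centered at (an exact solution of) $\hat H(\cdot,t_0)=0$ succeeds whenever the ratio $R=dt/r$ satisfies the bound~\eqref{eq:R-bound} and $\sqrt{nr^2+dt^2}$ is below the threshold in~\eqref{eq:krawczyk-bound-3}. So the first step is to argue these two quantities can be controlled uniformly in $t_0\in[0,1]$.

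First I would fix the relevant global constants. By the nonsingularity of $x(t)$ on the compact interval $[0,1]$ and continuity, the quantity $\|\partial_x H(x(t),t)^{-1}\|$ is bounded above by some $M_1<\infty$; after shearing, $\partial_x\hat H(0,t)=\partial_x H(x(t),t)$, so the same bound applies to the matrices $Y$ used by~\Cref{algo:krawczyk_homotopy}. Similarly, $\|F_1(x(t);p_1-p_0)\|$ (the term obtained from the shear, exactly as in the computation preceding~\eqref{eq:krawczyk-bound-2}) is bounded by some $M_2<\infty$ over $t\in[0,1]$, by continuity and compactness. Finally, since $\hat H$ is affine-linear and analytic in $x$ with the solution path staying in a bounded region, the second-derivative bound $L$ of~\Cref{thm:tilted-existence} can be taken uniform over all the shears encountered (the shear is a translation in $x$, which does not change $\partial_x^2 H$). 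Then~\eqref{eq:R-bound} holds as soon as $R<(M_1M_2)^{-1}$, and choosing $r$ (hence $dt=Rr$) small enough also forces~\eqref{eq:krawczyk-bound-3}; call the resulting threshold $dt_{\min}>0$, depending only on $n,M_1,M_2,L$.

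Next I would track how $dt$ evolves. The algorithm only shrinks $dt$ (by the factor $1/\lambda$) on a failed test, and by the previous paragraph a test centered at an exact solution cannot fail once $dt<dt_{\min}$; invoking~\Cref{rem:inexact}, the same remains true, with a slightly smaller threshold $dt_{\min}'>0$, when the center is a sufficiently good Newton refinement $x_0$ of $x(t_0)$ (the refinement step in~\Cref{algo:krawczyk_homotopy} guarantees $\|H(x_0,t_0)\|\le\epsilon$ as small as desired, and for $\epsilon$ small the perturbed inequality in~\Cref{rem:inexact} still has positive solutions bounded below). Consequently $dt$ never drops below $dt_{\min}'/\lambda$: once it is below $dt_{\min}'$ every subsequent test succeeds, and each success multiplies $dt$ by $\lambda>1$. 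Hence each interval advances $t_0$ by at least $dt_{\min}'/\lambda$, so at most $\lceil \lambda/dt_{\min}'\rceil$ successful steps reach $t_0\ge 1$; and between two successful steps the number of failures is bounded, since starting from any value $dt$ one reaches a value $<dt_{\min}'$ after at most $\log_\lambda(dt/dt_{\min}')$ halvings, after which a success is forced. Summing, the total number of iterations is finite, which is the claim.

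The main obstacle is the last paragraph's passage from the \emph{exact}-solution statement of~\Cref{thm:tilted-existence} to the \emph{approximate}-solution situation actually occurring in~\Cref{algo:krawczyk_homotopy}: one must check that the Newton-refinement residual $\epsilon$ at each step can be made small enough \emph{uniformly} (independently of $t_0$) to keep the perturbed Krawczyk inequality of~\Cref{rem:inexact} solvable with $r$ bounded below, and that this does not interact badly with the simultaneous scaling of $r$ and $dt$. The uniform bounds $M_1,M_2$ from compactness make this manageable, but it is the step requiring the most care; everything else is bookkeeping with the geometric factor $\lambda$.
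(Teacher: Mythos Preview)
Your proposal is correct and takes essentially the same approach as the paper: obtain uniform bounds on $\|Y\|$, the $F_1$-term, and the second-derivative quantity via compactness of $[0,1]$ and continuity of the nonsingular path, apply \Cref{thm:tilted-existence} to the sheared homotopy $\hat H$ to extract a uniform lower threshold on $dt$, and pass from exact to approximate centers through \Cref{rem:inexact}. The only differences are of emphasis---the paper is more explicit about bounding $\partial_x^2\hat H$ uniformly (arguing that the segment $s(t)$ stays in a tube around $x(t)$ for $dt\le\delta$, using $\|x'(t)\|<\infty$), while you are more explicit about the geometric $\lambda$-bookkeeping bounding the number of failed steps between successes.
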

\begin{proof}
Our smoothness assumption implies the real-valued function $\rho (t):[0,1] \to \mathbb{R}$ defined by $\rho (t) = \| \partial_x H (x(t), t))^{-1} \|$ is uniformly bounded by some constant $M_1$.
During any particular iteration at time $t_0,$ we have $\| Y \| = \| \partial_x\hat{H} (0, t_0)^{-1} \| = \rho (t_0) \le M_1.$
Furthermore, considering a sufficiently large compact region that contains $x^\star$, we know that $\|F_1(x^\star,p_1-p_0)\|$ can be bounded uniformly by some constant, $\|F_1(x^\star,p_1-p_0)\|\le M_2$. 

Finally, defining $I_r$ to be the interval box in $\mathbb{C}^n$ centered at the origin with the radius $r$, we claim that for some fixed $\hat{r},\delta>0$, there is a constant $M_3>0$ such that $M_3\geq \|\square \partial_x^2\hat{H}(I_{\hat{r}},[t_0,t_0+\delta])\|$ at any iteration at $t_0\in [0,1]$.
By considering a sufficiently large compact region containing $x(t)$, we know that there is $L>0$ such that $L\geq \|\partial_x^2H(x(t),t)\|$ for any $t\in [0,1]$. Therefore, there is $\hat{r}>0$ such that $2L\geq\|\partial_x^2 H(x,t)\|$ for any $x\in x(t)+I_{2\hat{r}}$ and $t\in[0,1]$. Here, $x(t)+I_{2\hat{r}}$ is the Minkowski sum of $x(t)$ and $I_{2\hat{r}}$. 
Then, there is $\delta>0$ such that for any $t_0\in [0,1]$, the line segment $s(t)$ connecting $x(t_0)$ and $x(t_0+dt)$ is contained in $x(t)+I_{\hat{r}}$ whenever $dt\leq\delta$. Note that this is possible since we may assume that $\|x'(t)\|<\infty$ for all $t\in[0,1]$. Setting $\hat{H}(x,t)=H(x+s(t),t)$, we have $\|\partial_x^2\hat{H}(x,t)\|\leq 2L$ if $x\in I_{\hat{r}}$ and $t\in [t_0,t_0+\delta]$ at any time $t_0\in [0,1]$. This shows that $M_3=2L$ is a uniform upper bound on $\|\square \partial_x^2 \hat{H}(I_{\hat{r}},[t_0,t_0+\delta])\|$ at any $t_0\in[0,1]$.





We now apply~\Cref{thm:tilted-existence} to the homotopy $\hat{H}.$
Referring to~\eqref{eq:R-bound}, the existence test succeeds provided that
\[
R < (M_1 \cdot M_2)^{-1} 
\Rightarrow 
dt < r \cdot (M_1\cdot M_2)^{-1} < (M_1 \cdot M_2)^{-1}. 
\]
Thus, there is a uniform lower bound on the value of $dt $ at any point in the algorithm.
Moreover, by the bound \eqref{eq:krawczyk-bound-3}, choosing $r$ and $dt$ satisfying that
\[\sqrt{nr^2+dt^2}<\frac{1}{\sqrt{2}\cdot M_1\cdot M_3},\]
uniform lower bounds for $r$ and $dt$ are obtained.
Therefore, the uniqueness of the solution path when tracking from an exact solution at time $t=0$ is guaranteed.
When tracking from an approximate solution at subsequent times $t=t^{(1)}, \ldots ,$ we proceed by first refining the solution so that $r$ can be chosen as indicated in~\Cref{rem:inexact}, giving us the needed analogue of inequality~\eqref{eq:Y-bound}.
\end{proof}

For the base-line method~\Cref{algo:krawczyk_homotopy_rec}, the proof is similar, but simpler, since~\Cref{thm:parametric-krawczyk} applies directly to the homotopy $H.$

\begin{remark}\label{remark:termination}
The theoretical analysis of~\Cref{algo:krawczyk_homotopy,algo:krawczyk_homotopy_rec} depends on carefully-chosen constants.
We point out the following regarding these constants:
    \begin{enumerate}
\item    From the inequality~\eqref{eq:R-bound}, a sufficiently small value of $R$ is required to guarantee termination. As~\Cref{algo:krawczyk_homotopy_rec,algo:krawczyk_homotopy} do not change the value of $R$, the initial values of $dt$ and $r$ should be chosen carefully to ensure $R$ is sufficiently small. The resulting theoretical bounds on $R$ are likely pessimistic, which would lead to more iterations per path. Thus, in practice, it is recommended to choose a ``reasonable'' initial value for $R$, at the possible expense of termination.
    Our experiments use a range $R \in [1/5, 2].$
\item Additional analysis is needed in order to make all constants appearing in~\Cref{cor:termination} effective. For instance, a uniform bound $|\rho (t)|\le M_1$ for all $t\in [0,1]$ can be obtained under the slightly stronger hypothesis that $x(t)$ is analytic on an open interval containing $[0,1].$
Indeed, since the value of the solution curve $x(0)$ is known exactly, and the values of derivatives of $x(t)$ can be computed using the Davidenko differential equation $x'(t) = (\partial_x H)^{-1}\cdot \partial_t H,$ this hypothesis would enable a uniform Taylor approximation of $\rho (t)$ to any desired accuracy on $[0,1].$
\end{enumerate}
\end{remark}

\section{Experiments}\label{sec:experiments}

In this section, we present experiments conducted with our preliminary implementation of \Cref{algo:krawczyk_homotopy} in Macaulay2~\cite{M2}.
Throughout this section, we use hyper-parameter setting $\lambda = 3$ for the step increase/decrease factor. The values for step-size $dt$ and radius $r$ depend on the experiment.
Real interval arithmetic computations are performed by the library MPFI~\cite{mpfi}.
A current limitation of our implementation is that complex interval computations are performed at the top-level, and thus tracking complex homotopies is slower than real homotopies.

For parametric systems $F(x; p)$ defined over the real numbers, we are typically interested in real-valued solutions $x\in \RR^n$ for real-valued parameters $p\in \RR^m$.
However, a real parameter path $p:[0,1] \to \RR^m$ typically leads to singularities: that is, $JF (x(t); p(t))$ will be singular for some $t\in (0,1).$
Thus, it is typical to instead use a complex-valued path $p: [0,1] \to \CC^m$, whose target parameters $p_1=p(1)$, and possibly also start parameters $p_0 = p(0),$ are real-valued.
Moreover, $p$ is constructed in a randomized fashion; depending on the application, $p_1\in \CC^m$ may be chosen randomly, or for $p_1\in \RR^m$ a suitably random complex path may be constructed using the $\gamma$-trick~\cite[Chapter 8]{SommeseWampler:2005} or some variant thereof.

Motivated by the preceding discussion, we evaluate our method for complex homotopies in~\Cref{subsec:complex-comparison} by comparing the number of predictor steps used by our method to those reported in previous works on examples ranging with $1\le n \le 6$ variables.
Complementary to these results, we present timings for tracking a special class of real homotopies in~\Cref{subsec:real-comparison} in up to $n=20$ variables.
Taken together, these results show that the Krawczyk homotopy is competitive with the previous state-of-the-art in certified path tracking, and that the number of variables is not an inherent limitation.

All experiments were conducted with a Macbook M2 pro 3.5 GHz, 16 GB RAM. The code is available at 
\begin{center}
\url{https://github.com/klee669/krawczykHomotopy}
\end{center}

\subsection{Benchmark examples}\label{subsec:complex-comparison}

We begin with the univariate example presented in \cite[Section 7.1]{hauenstein2014posteriori}. Considering $F(x)=x^2-1-m$ and $v=m$ for $m>-1$, we define a homotopy $H(x,t)=F(x)+v t=x^2-1-m+mt$. For the initial choice of $dt=.02$ and $r=.1$, we measure the number of iterations by varying the value of $m$. The result is summarized in \Cref{tab:comparison-newtonhomotopy}. 
For given initial values of $dt=.02$ and $r=.1$, \Cref{algo:krawczyk_homotopy} requires fewer iterations than that of \cite{hauenstein2014posteriori} except for $m=30000$. We remark that, depending on initial values of $dt$ and $r$, the number of iterations of \Cref{algo:krawczyk_homotopy} may vary.

We also consider benchmark examples of~\cite{beltran2012certified} using our implementation with initial values $dt=.1$ and $r=.1$. For each example, we measure the maximum, minimum, and average number of iterations and compare the results with those reported in \cite{beltran2012certified} (See \Cref{table:comparisonResults}).

Lastly, to examine the impact of values of the hyper-parameters $dt$ and $r$ on the performance of the Krawczyk homotopy, we address two benchmark problems using different values of $dt$ and $r$ while varying their ratios (See \Cref{table:ratioResults}). The table shows that the results have similar average numbers of iterations for a given ratio $R=\frac{dt}{r}$ of $dt$ and $r$. The results suggest that the performance of the Krawczyk homotopy is more significantly influenced by the ratio of $dt$ to $r$, rather than their individual values.

\subsection{A real homotopy}\label{subsec:real-comparison}

In addition to the results obtained for complex homotopies, we provide timings for tracking a special class of real homotopies where bifurcations are naturally avoided.
Our setup is the classical problem of \emph{low-rank matrix approximation}, following the geometric formulation in~\cite{draisma2016euclidean}.
Let $\mathcal{V}_1 \subset \RR^{n\times n}$ denote variety of rank $\le 1$ matrices, and consider the incidence correspondence
\[
\mathcal{E}_n = \left\{ (A, xy^T) \in \RR^{n \times n} \times \mathcal{V}_1 \mid (A - x y^T) \in \left(T_{xy^T} \mathcal{V}_1 \right)^\perp \right\} 
\]
(here $T_\bullet$ denotes the tangent space).
The map $\pi : \mathcal{E}_n \to \RR^{n\times n}$ onto the first factor is a generically $n$-to-$1$ map. More precisely, for generic $A\in \RR^{n \times n}$ we have from the singular value decomposition, $$A = \left( u_1 \, | \cdots | \, u_n\right) \operatorname{diag} (\sigma_1, \ldots , \sigma_n) 
\left( v_1 \, | \cdots | \, v_n\right)^T, \quad \text{that}
$$
\[\pi^{-1} (A) = \left\{ (\sqrt{\sigma_1} u_1) (\sqrt{\sigma_1} v_1)^T, \,\ldots , \,  (\sqrt{\sigma_n} u_n) (\sqrt{\sigma_n} v_n)^T \right\}.\] 
Moreover, it is known that the branch locus of $\pi $---defined here to be the set of points $A \in \RR^{n\times n}$ such that $| \pi^{-1} (A)| \ne n$---has codimension greater than one~\cite{ilyushkin}.
Because of this, we may expect that a suitably parameter path $p(t): [0,1] \to \RR^{n\times n}$, with $p(0)= A_0,$ $p(1) = A_1$, avoids the branch locus with probability-one, and use such paths to construct homotopies connecting known points of one fiber $\pi^{-1} (A_0)$ to another $\pi^{-1} (A_1)$.

In our experiments, we consider the straight-line segment $p$ that connects the identiy matrix $A_0 = I$ to the \emph{Hilbert matrix},
\[
A_1 = \begin{pmatrix}
1 & \frac{1}{2} & \cdots & \frac{1}{n-1}\\
\frac{1}{2} & \frac{1}{3} & \cdots & \frac{1}{n} \\
\vdots & \vdots & \ddots & \vdots \\
\frac{1}{n} & \frac{1}{n+1} & \cdots & \frac{1}{2n-1}
\end{pmatrix}.
\]
This is a notoriously ill-conditioned test matrix used in numerical linear algebra.
Our modest goal is to certify the paths connecting the best rank-one approximations, given by $(\sqrt{\sigma_1} u_1) (\sqrt{\sigma_1} v_1)^T,$ which are the easiest solution paths to track in this example.

To carry this out, we use a suitable system of $2n$ parametric equations in variables $x,y\in \RR^n$ which vanish on $\mathcal{E}_1$ and whose solution paths are regular throughout the homotopy.
From the objective function $\ell (x,y; A)= \| A - x y^T\|_2^2,$ we use $2n-1$ critical point equations $\partial_{x_2} g, \ldots , \partial_{x_n} g, \partial_{y_1} g, \ldots , \partial_{y_n} g,$ and impose the equation of a generic affine chart $b^T x - c = 0.$

The results of our experiments with this real homotopy are shown in~\Cref{table:matrix-euc}.
The table illustrates that the number of steps used by our method grows moderately with respect to the number of variables.
The measured timings grow at a comparable rate.
The number of iterations per second for this example is seen to steadily decrease with the number of variables.

The changes in the step-size $dt$ as tracking progresses are visualized in~\Cref{fig:dtgraph}. For this problem,~\Cref{algo:krawczyk_homotopy} requires smaller step-size both as $t$ tends towards $1$ and as $n$ increases. 

We point out that there will be some variance in such experiments due e.g.\ to the randomly chosen chart, as witnessed by the progression between cases $n=5,6,7$.
Still, even for the cases considered with tens of variables, the step-size consistently stays above unit roundoff and all paths are successfully tracked within an hour.

\section*{Acknowledgements}
We thank Michael Burr for several useful discussions.
We also thank the ISSAC 2024 referees for many helpful comments that led to improvements in the manuscript.
Timothy Duff acknowledges support from an NSF Mathematical Sciences Postdoctoral Research Fellowship (DMS-2103310).

\section*{Additional figures and tables}
\begin{table}[ht]
    \centering
$dt=.02, r =.1\qquad\qquad\qquad\qquad\quad$\\
\begin{tabular}{c|c||c}
    $m$ value & $\# $ iters & HHL \cite{hauenstein2014posteriori} $\#$ iters\\
    \hhline{=|=||=}
    $10$ & $31$ & $51$ \\
    $40$ & $20$ & $82$ \\
    $100$ & $14$ & $105$ \\
    $2000$ & $23$ & $180$ \\
    $5000$ & $57$ & $204$ \\
    $10000$ & $108$ & $220$ \\
    $30000$ & $327$ & $250$ \\
    \end{tabular}    \caption{Comparing the number of iterations between \Cref{algo:krawczyk_homotopy} and the certified tracking algorithm in \cite{hauenstein2014posteriori}.}
    \label{tab:comparison-newtonhomotopy}
\end{table}

\begin{table}[ht]
$dt=.1, r =.1\qquad\qquad\qquad\qquad\qquad\qquad\qquad\qquad\qquad\qquad\quad$\\
\centering
\scalebox{.988}{
\begin{tabular}{c|c|r|c|c||r}
    System & $\#$ roots & avg.~$\#$ iters & max &min & BL \cite{beltran2012certified} avg. \\
    \hhline{=|=|=|=|=||=}
    Random$_{(2^3)}$ & $8$ & $317.75$ & $509$ & $203$ & $198.5$\\
    Random$_{(2^4)}$ & $16$ & $563.25$ & $1257$ & $211$ & $813.81$\\
    Random$_{(2^5)}$ & $32$ & $675.31$ & $5119$ & $209$ & $1542.5$\\
    Random$_{(2^6)}$ & $64$ & $1166.42$ & $5415$ & $267$ & $2211.58$\\
    Katsura3 & $4$ & $264.25$ & $289$ & $239$ & $569.5$\\
    Katsura4 & $8$ & $331.75$ & $451$ & $233$ & $1149.88$\\
    Katsura5 & $16$ & $444.75$ & $731$ & $311$ & $1498.38$\\
    Katsura6 & $32$ & $721.47$ & $1391$ & $497$ & $2361.81$
\end{tabular}}
\caption{\Cref{algo:krawczyk_homotopy} versus certified tracking in~\cite{beltran2012certified}.}
\label{table:comparisonResults}
\end{table}

\begin{table}[ht]
Random$_{(2^3)}\qquad\qquad\qquad\qquad\qquad\qquad\qquad\qquad\qquad\quad$\\
{\centering
\begin{tabular}{c||c|r||c|r}
    $R=\frac{dt}{r}$ & $(dt,r)$ & avg. $\#$ iters &     $(dt,r)$ & avg. $\#$ iters  \\[1.2pt]
    \hhline{=||=|=||=|=}
    $.5$ & $(.2,.4)$ & $134.75$ &      $(.02,.04)$ & $135.63$\\
     $1$ &   $(.4,.4)$ & $96.5$ &     $(.04,.04)$ & $99.5$  \\    $1.5$ & $(.6,.4)$ & $98.63$ & $(.06,.04)$ & $98.88$ \\
    $2$ & $(.8,.4)$ & $96.75$  &    $(.08,.04)$ & $99.5$ 
    \end{tabular}}\\
    \vspace{5pt}
Katsura4$\qquad\qquad\qquad\qquad\qquad\qquad\qquad\qquad\qquad\qquad$\\
{\centering
\begin{tabular}{c||c|r||c|r}
    $R=\frac{dt}{r}$ & $(dt,r)$ & avg. $\#$ iters &    $(dt,r)$ & avg. $\#$ iters  \\[1.2pt]
    \hhline{=||=|=||=|=}
$.5$ & $(.2,.4)$ & $287.75$ &  $(.02,.04)$ & $298.88$ \\
$1$ & $(.4,.4)$ & $351.88$ & $(.04,.04)$ & $326.25$ \\
$1.5$ & $(.6,.4)$ & $444.38$ &        $(.06,.04)$ & $460.13$  \\
 $2$ & $(.8,.4)$ & $594.5$  & $(.08,.04)$ & $552.63$ \\
\end{tabular}}\\
\caption{Average number of iterations for different values of the ratio $\frac{dt}{r}.$}
\label{table:ratioResults}
\end{table}

\begin{table}[ht]
\centering
$dt=.2, r =.1\qquad\qquad\qquad\qquad\qquad\quad$\\
\begin{tabular}{c|c|c|c}
    $n$ & $\#$ vars &  $\#$ iters & elapsed time (s.)\\
    \hhline{=|=|=|=}
    2 & 4& 37 & 1.91\\
    3 & 6&  147 & 10.77\\
    4& 8& 291 & 32.55\\
    5& 10 &421 & 70.07\\
    6& 12 &885 & 215.72\\
    7& 14 & 1067 & 364.56\\
    8& 16& 1808 & 820.29\\
    9& 18&2119 & 1294.18\\
    10& 20& 3611 & 2768.96

\end{tabular}
\caption{Timing data for low-rank matrix approximation.}
\label{table:matrix-euc}
\end{table}
 \begin{figure}[ht]
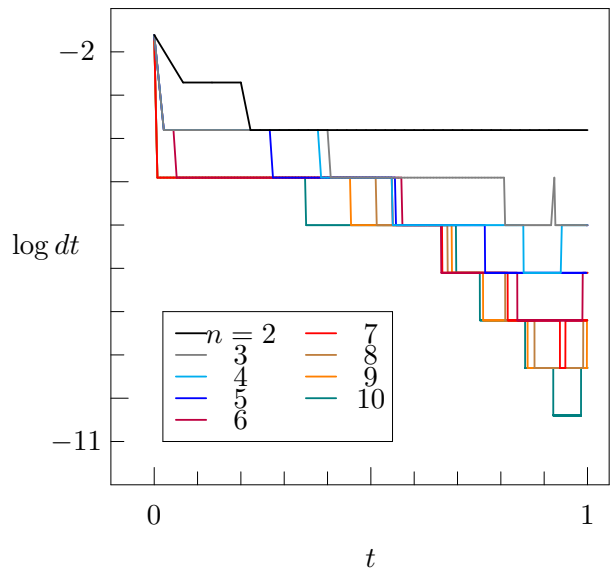

     \centering
     \include{dtgraph} 
     \caption{Step-sizes (log scale) for low-rank approximation.}
     \label{fig:dtgraph}
 \end{figure}

\bibliography{ref.bib}
\bibliographystyle{abbrv}

\end{document}